\newtheorem{theorem}{Theorem}
\newtheorem{lemma}[theorem]{Lemma}
\newtheorem{proposition}[theorem]{Proposition}
\newcommand{\N}{\mathbb{N}}
\numberwithin{theorem}{section} \numberwithin{equation}{section}
\title[Generalized Telegraph equation with fractional $p(x)$-Laplacian]{Generalized Telegraph equation with fractional $p(x)$-Laplacian}
\author[ J. Vanterler da C. Sousa $^{*}$, Mbarki Lamine and Leandro S. Tavares]{J. Vanterler da C. Sousa $^{*}$, Mbarki Lamine and Leandro S. Tavares}
\address[J. Vanterler da C. Sousa ]{Aerospace Engineering, PPGEA-UEMA, Department of Mathematics, DEMATI-UEMA, 
São Luís, MA 65054, Brazil.}
\email{\tt vanterler@ime.unicamp.br}
\address[Mbarki Lamine] {Department of Mathematics, Faculty of sciences Tunis, University of Tunis el Manar, Tunisia}
\email{\tt mbarki.lamine2016@gmail.com, lamine.mbarki@fst.utm.tn}
\address[Leandro S. Tavares]{Center of  Sciences and   Technology, Federal University of Cariri,   63048-080, Juazeiro do Norte/CE, Brazil}
\email{\tt leandrolstav@gmail.com}
\subjclass[2010]{35R11, 35A15, 47J30,35J60.\\$^{*}$ Correspondent author. J. Vanterler da C. Sousa}
\keywords{$\psi$-Hilfer fractional derivative, $p(x)$-Laplacian, Generalized telegraph equation.}
\begin{document}
 \begin{abstract} The purpose of this  paper is devoted to \textcolor{red}{discussing} the existence of solutions for a generalized fractional telegraph equation involving a class of $\psi$-Hilfer fractional with $p(x)$-Laplacian differential equation.
\end{abstract}
\maketitle

\section{Introduction and motivation}

\textcolor{red}{Over the last few decades  problems} involving wave equations have been paid considerable attention due to \textcolor{red}{their} utilization \textcolor{red}{in modeling mathematical physics problems} and view of their applications.

The study of quasilinear wave equations involving the $p-$Laplacian operator in the principal part and viscosity damping \textcolor{red}{has emerged from} the nonlinear Voigt model of longitudinal motion of a rod made \textcolor{red}{from  viscoelastic} material \cite{13}. Specifically, for the so-called Ludwick materials, it can be shown that they obey the following equations, under the effect of an external force $f$. For the Euler rod
\begin{equation*}
    \rho \frac{\partial^{2} u}{\partial x^{2}} = K \frac{\partial}{\partial x} \left(\left|\frac{\partial u}{\partial x} \right|^{p-1} \frac{\partial u}{\partial x}\right)+f.
\end{equation*}
And for the Euler bean 
\begin{equation*}
    \rho A \frac{\partial^{2} u}{\partial x^{2}} =  \frac{\partial^{2}}{\partial x^{2}} \left(K I_{n}\left|\frac{\partial^{2} u}{\partial x^{2}} \right|^{p-1} \frac{\partial^{2} u}{\partial x^{2}}\right)+f,
\end{equation*}
where $u=u(x,t)$ means the displacement at the time $t$ and the space coordinate $x$, $\rho$ represents the density of the material, $K$ is the engineering constant, $p$ is the strain-hardening exponent, $A$ is the cross-sectional area and $I_{n}$ designates the second moment of the cross-section for the material.

Henceforth, It is well known that an improvement in the convergence rate \textcolor{red}{can be achieved} by considering the corresponding second order damped problem, \cite{3,6,10}. For instance, the \textcolor{red}{second-order} damped problem naturally appears in \textcolor{red}{modeling} mechanical systems. Moreover, \textcolor{red}{the} study of problems involving the $p$-Laplacian operator  is a \textcolor{red}{well-consolidated} and rich topic of PDEs due to its well-founded and \textcolor{red}{highly-impact} results, see for instance \cite{Li,Yuksekkaya,Shahrouzi,Messoaudi2,Guo,Li2,Yang1} and the references therein. \textcolor{red}{As a result}, there are numerous works that address questions about the telegraph equation, viscoelastic hyperbolic equation, viscoelastic wave \textcolor{red}{equation,} and wave equation \cite{Majee,Pei1,Antontsev,Acerbi}, among others\textcolor{red}{. These are} elaborated via Dirichlet problems and variational techniques. Also, there is a wide literature about $p$-Laplacian problems involving fractional operators  \cite{Boudjeriou1,Boudjeriou2,Boudjeriou3,Liao,Jiang}.

\textcolor{red}{On the other hand,} scientists have been considering Partial Differential Equations involving variable exponents due to their applicability in several relevant models. One of the main applications of such equations \textcolor{red}{has} appeared in the study of electrorheological fluids. As mentioned in \cite{rare}, the study of these fluids originated with the discovery of Bingham fluids, which spontaneously stop moving. Indeed, in the classical reference \cite{winslow}, W. Winslow presented one of the main properties of electrorheological fluids, which is the emergence of parallel and string-like formations in the fluid, under the influence of an electrical field. This phenomenon is known in the literature as the Winslow effect. Furthermore, the electrical field can increase the fluid's viscosity by up to five orders of magnitude, as stated in \cite{rare}. Additionally, NASA laboratories have conducted several studies on electrorheological fluids, as highlighted in the interesting paper \cite{rad}.

There is a growing interest in problems involving  variable exponents from \textcolor{red}{a} mathematical point of view. For example, in the reference \cite{acermingi}, the regularity of solutions for  a stationary \textcolor{red}{system  related to}  electrorheological fluids is \textcolor{red}{proven}. In the paper  \cite{fzz}, a strong maximum principle for a  variable exponent operator  is obtained, which generalizes the classical case of the Laplacian operator. Moreover,   several applications are explored. \textcolor{red}{We also  highlight}  \cite{minrad}, which provides an overview of elliptic variational problems with general  growth conditions and variable exponents. For a complete presentation of the theory of Sobolev spaces in the \textcolor{red}{context of variable exponents} and their applications, \textcolor{red}{refer to  the}  references  \cite{ruzi,rare}.

 In 2011, Stavrakakis-Stylianou \cite{Stavrakakis}, considered the following problem
\begin{eqnarray}\label{eq111}
\left\{
 \begin{array}{ll}
 u_{tt}-div(|\nabla u |^{p(x)-2} \nabla u)-\Delta u_{t}+g(u)=f(x,t), \;\;\ (x,t)\in\Omega\times(0,T):=Q_{T}\\
u(x,t)=0\;\;\quad (x,t)\in \partial\Omega\times(0,T):=\Gamma_{T}\\
u(x,0)=u_{0}(x),\,\,\,u_{t}(x,0)=u_{1}(x)\,\,\,x\in\Omega,
\end{array}
\right.
\end{eqnarray}
where $\Omega\subset\mathbb{R}^{N}$, $\N\geq 1$ is a bounded domain, $\partial\Omega$ is Lipschitz continuous. The equation  (\ref{eq111}) is related  to several physical applications in  viscoelasticity and processes of filtration through source terms. 

In 2017, Messaoudi-Talahmed \cite{Messaoudi}, treated the following problem 
\begin{eqnarray}\label{eq112}
\left\{
 \begin{array}{ll}
 u_{tt}-div(|\nabla u |^{m(x)-2} \nabla u)+\mu u_{t}=|u|^{p(x)-2}u, \;\;\ in\,\,\Omega\times(0,T)\\
u(x,t)=0\;\; on\,\partial\Omega\times(0,T)\\
u(x,0)=u_{0}(x),\,\,\,u_{t}(x,0)=u_{1}(x)\,\,\,in\,\Omega,
\end{array}
\right.
\end{eqnarray}
where $\mu\geq0$. For more details about  (\ref{eq112}),  see \cite{Messaoudi}. In the same year, Messaoudi and Talahmed \cite{Messaoudi23} established the same problem by adding a constant $b>0$ to the right-hand side of the equation.

In 2018, Messaoudi et al. \cite{Messaoudi1}, studied the problem
\begin{eqnarray}\label{eq113}
\left\{
 \begin{array}{ll}
 u_{tt}-div(|\nabla u |^{r(\cdot)-2} \nabla u)+|u_{t}|^{m(\cdot)-2} u_{t}=0, \;\;in\,\,\Omega\times (0,T)\\
u(x,t)=0\;\;\quad on\,\, \partial\Omega\times(0,T)\\
u(x,0)=u_{0}(x),\,\,\,u_{t}(x,0)=u_{1}(x)\,\,in\,\Omega,
\end{array}
\right.
\end{eqnarray}
where the exponents $m(\cdot)$ and $r(\cdot)$ are given measurable functions on $\Omega$. See \cite{Messaoudi1}, for more details about the problem (\ref{eq113}).

On the other hand, when dealing with fractional differential equations, we are considering  a well-established theory with numerous applications and important consequences that are related  real phenomena \cite{Diethelm,Lakshmikantham,Kilbas}. Moreover, it is interesting to consider a general notion of fractional derivative of a function  with respect to another function,  due to the several  definitions of integrals and fractional derivatives. Such questions was recently considered  in Sousa \& Oliveira \cite{Sousa4}, where the authors introduced the $\psi-$Hilfer fractional derivative and provided several examples. We also point out that  there has been a recent  interest in equations with the  $\psi$-Hilfer fractional operator,  see for example \cite{Alsaedi,Sousa,Sousa12,Zuo,Sousa1,Ezati,Fan}.

So, motivated by the problem (\ref{eq111})-(\ref{eq113}), in this paper, we are concerned with the generalized fractional telegraph equation given by 
{\color{red}\begin{equation}\label{Prob1}
\varepsilon u_{tt}-{}^{\bf H}\mathbf{D}_{T}^{\alpha,\beta;\psi}\big( \big|{}^{\bf H}\mathbf{D}^{\alpha,\beta;\psi}_{0^{+}}u\big|^{p(x)-2}\,\,{}^{\bf H}\mathbf{D}^{\alpha,\beta;\psi}_{0^{+}}u \big)+u_{t}=f(x,t);\;\;\quad\forall (x,t)\in \mathbf{Q}_{T}:=\Omega\times (0,T)
\end{equation}}
where $f(x,t)$ is a extremal force, $\varepsilon>0$, ${}^{\bf H}\mathbf{D}^{\alpha,\beta;\psi}_{0^{+}}(\cdot)$ and ${}^{\bf H}\mathbf{D}_{T}^{\alpha,\beta;\psi}(\cdot)$ are the $\psi$-Hilfer partial fractional derivative of order $\frac{1}{p(x)}<\alpha<1$ and of type $0\leq \beta \leq 1$, with the conditions{\color{red}
 \begin{eqnarray}\label{eq1}
\left\{
 \begin{array}{ll}
 u(x,t)=0, \quad (x,t)\in \Gamma_{T}:=\partial\Omega\times (0,T)\\
u(x,0)=u_{0}(x),\;\;\quad\forall x\in \Omega\\
u_{t}(x,0)=u_{1}(x),\;\;\quad\forall x\in \Omega,
\end{array}
\right.
\end{eqnarray}}
where $\Omega=(0,L)$.

Throughout this work, the  functions $p(x), u_{0}, u_{1}$ and $g$ satisfy the following condition:{\color{red}
\begin{eqnarray}\label{eq2}
\left\{
 \begin{array}{ll}
 2\leq p^{-}\leq p(x)\leq p^{+}<\infty\\
u_{0}\in \mathcal{H}_{p(x)}^{\alpha, \beta;\psi}(\Omega),\; u_{1}\in L^{2}(\Omega),\; g(x)\in L^{\frac{p(x)}{p(x)-1}}(\Omega).
\end{array}
\right.
\end{eqnarray}}

When working with problems with variable exponents, the principal difficulty is dealing with non-homogeneity,   which imply that the application of the  Komornik's inequality is not immediately clear. In order to overcome such difficulty,    the problem considered will be studied by means of an alternative method that consists in considering a second-order dynamical system that establishes the strong convergence of the difference norm.

The main result of the present manuscript is given as follow:
\begin{theorem}\label{theorem 1}
Assume that the condition \eqref{eq2} holds. If $u(x,t)$ satisfies \eqref{Prob1} and  \eqref{eq1}, then for $t>0$ there exists $u^{*}(x)$ satisfying
$$\int_{0}^{L}\left|{}^{\bf H}\mathbf{D}^{\alpha,\beta;\psi}_{0^{+}}u(x,t)-{}^{\bf H}\mathbf{D}^{\alpha,\beta;\psi}_{0^{+}}u^{*}(x)\right|^{p(x)}dx\leq \Theta t^{\frac{1}{1-p^{+}}},$$
where $\Theta=\left[c_{2}^{-p^{+}}(p^{+}-1)\right]^{\frac{1}{1-p^{+}}}$,  which is a constant independent of time the variable but  depends only on the coefficient $\varepsilon$. {\color{red} Here $u^{*}$ is the solution the steady state equation.}
\end{theorem}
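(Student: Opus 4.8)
The plan is to recast the evolution problem as an abstract second-order damped dynamical system in the energy space $\mathcal H_{p(x)}^{\alpha,\beta;\psi}(\Omega)$ and to exploit the dissipativity coming from the damping term $u_t$ together with the monotonicity of the $p(x)$-Laplacian. Concretely, first I would introduce the energy functional $E(t) = \frac{\varepsilon}{2}\|u_t(\cdot,t)\|_{L^2(\Omega)}^2 + \into \frac{1}{p(x)}\big|{}^{\bf H}\mathbf D_{0^+}^{\alpha,\beta;\psi}u(x,t)\big|^{p(x)}\,dx$ (minus the work done by $f$), multiply \eqref{Prob1} by $u_t$, integrate over $\Omega=(0,L)$, and use the fractional integration-by-parts formula relating ${}^{\bf H}\mathbf D_{0^+}^{\alpha,\beta;\psi}$ and ${}^{\bf H}\mathbf D_T^{\alpha,\beta;\psi}$ to obtain the energy identity $E'(t) = -\|u_t\|_{L^2}^2 \le 0$ (with the appropriate sign convention on $f$). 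This gives boundedness of the trajectory and integrability of $\|u_t\|_{L^2}^2$ in time, which forces $u_t(\cdot,t)\to 0$ along a sequence.

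Next I would identify the steady-state profile $u^*$ as the solution of the stationary equation ${}^{\bf H}\mathbf D_T^{\alpha,\beta;\psi}\big(|{}^{\bf H}\mathbf D_{0^+}^{\alpha,\beta;\psi}u^*|^{p(x)-2}\,{}^{\bf H}\mathbf D_{0^+}^{\alpha,\beta;\psi}u^*\big) = f^\infty$ (the asymptotic value of the force, or simply $f$ if it is time-independent) with the same boundary data, whose existence and uniqueness follow from the strict monotonicity and coercivity of the fractional $p(x)$-Laplacian on $\mathcal H_{p(x)}^{\alpha,\beta;\psi}(\Omega)$ via the standard theory of monotone operators. Then I would test the difference of \eqref{Prob1} and the stationary equation with $u - u^*$ and estimate the quantity $\Phi(t) := \into |{}^{\bf H}\mathbf D_{0^+}^{\alpha,\beta;\psi}u(x,t) - {}^{\bf H}\mathbf D_{0^+}^{\alpha,\beta;\psi}u^*(x)|^{p(x)}\,dx$. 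Using the elementary vector inequality $\big(|a|^{p-2}a - |b|^{p-2}b\big)\cdot(a-b) \ge c_p |a-b|^p$ valid for $p\ge 2$, the monotone term controls $\Phi(t)$ from below by $c_2^{p^+}\,\Phi(t)^{\text{(something)}}$; here the variable exponent forces one to split $\Omega$ into the regions where $|{}^{\bf H}\mathbf D_{0^+}^{\alpha,\beta;\psi}(u-u^*)|\le 1$ and $>1$ and to interpolate, ultimately yielding a differential inequality of the Bernoulli type
\[
\frac{d}{dt}\,\Phi(t) \le -c_2^{p^+}\,\Phi(t)^{p^+}
\]
(after absorbing the inertial and damping contributions, which is where the coefficient $\varepsilon$ enters). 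Integrating this ODE inequality from $0$ to $t$ gives $\Phi(t)^{1-p^+} \ge \Phi(0)^{1-p^+} + c_2^{p^+}(p^+-1)t \ge c_2^{p^+}(p^+-1)t$, hence $\Phi(t) \le \big[c_2^{p^+}(p^+-1)t\big]^{\frac{1}{1-p^+}} = \Theta\, t^{\frac{1}{1-p^+}}$ with $\Theta = [c_2^{-p^+}(p^+-1)]^{\frac{1}{1-p^+}}$, which is exactly the claimed estimate.

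The main obstacle I expect is the non-homogeneity of the variable-exponent modular: unlike the constant-exponent case, $\Phi(t)$ is not a norm to a fixed power, so one cannot directly close a clean Bernoulli inequality; Komornik-type lemmas do not apply verbatim. The technical heart of the argument is therefore the interplay between the modular $\Phi(t)$ and the corresponding Luxemburg norm, using the norm-modular inequalities for $L^{p(x)}$ (if $\rho(v)\le 1$ then $\|v\|^{p^+}\le \rho(v)\le\|v\|^{p^-}$, and the reverse chain if $\rho(v)\ge 1$) to convert the coercivity estimate on the monotone term into a power of $\Phi$ with the uniform exponent $p^+$. A secondary subtlety is handling the second-order inertial term $\varepsilon u_{tt}$ when testing with $u-u^*$: one writes $\into \varepsilon u_{tt}(u-u^*) = \frac{d}{dt}\into \varepsilon u_t (u-u^*) - \into \varepsilon |u_t|^2$, so the estimate really controls a perturbed functional $\Phi(t) + \varepsilon\into u_t(u-u^*)$ and one must show the perturbation is dominated by $\Phi(t)$ using the already-established decay of $\|u_t\|_{L^2}$ — this is precisely why $\Theta$ ends up depending only on $\varepsilon$ (through the absorption constants) and not on the initial data.
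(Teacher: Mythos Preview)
Your preliminary steps (energy identity, boundedness of the trajectory, existence and uniqueness of the stationary solution $u^*$) are fine and match the paper. The gap is in the central step: testing the difference of the equations with $u-u^*$ does \emph{not} produce a differential inequality for $\Phi(t)=\int_0^L|{}^{\bf H}\mathbf D^{\alpha,\beta;\psi}_{0^+}(u-u^*)|^{p(x)}dx$. What that test actually yields, after the rewriting you indicate, is
\[
\frac{d}{dt}\Big[\varepsilon\!\int_0^L u_t(u-u^*)\,dx+\tfrac12\|u-u^*\|_{L^2}^2\Big]-\varepsilon\|u_t\|_{L^2}^2+c\,\Phi(t)\le 0,
\]
so the quantity whose derivative appears is the $L^2$-based functional $\tfrac12\|u-u^*\|_{L^2}^2+\varepsilon\int u_t(u-u^*)$, not $\Phi(t)+\varepsilon\int u_t(u-u^*)$ as you write. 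Since $\|u-u^*\|_{L^2}^2$ is only controlled by a \emph{power} of $\Phi(t)$ (via Poincar\'e and the modular--norm inequalities), you cannot close a Bernoulli inequality for $\Phi$ from this relation alone; the perturbation is not dominated by $\Phi$.

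The paper resolves this by working not with $\Phi$ but with the auxiliary Lyapunov functional
\[
\mathcal G(t)=\int_0^L\Big(\tfrac{1}{2\varepsilon}w^2+ww_t+\varepsilon w_t^2\Big)dx+2\big(\mathcal I_{p(x)}^{\alpha,\beta;\psi}(u)-\mathcal I_{p(x)}^{\alpha,\beta;\psi}(u^*)\big),\qquad w=u-u^*,
\]
whose design is the heart of the proof. One computes $\mathcal G'(t)\le -\tfrac{2^{2-p^+}}{\varepsilon}\Phi(t)-\|w_t\|_{L^2}^2$, and then bounds $\mathcal G(t)$ from above by $\tilde C\,\Phi(t)^{1/p^+}+C\|u_t\|_{L^2}^2$ (this is where the modular--norm inequalities and the a~priori bound $\Phi(t)\le\Theta_1$ enter). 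Combining the two gives the Bernoulli inequality $\mathcal G(t)\le C_4(-\mathcal G'(t))^{1/p^+}$ for $\mathcal G$, not for $\Phi$; integrating yields $\mathcal G(t)\le\Theta\,t^{1/(1-p^+)}$. Only at the very end is the decay transferred to $\Phi$ via the separately proved inequality $\tfrac{2^{2-p^+}}{p^+}\Phi(t)\le \mathcal I_{p(x)}^{\alpha,\beta;\psi}(u)-\mathcal I_{p(x)}^{\alpha,\beta;\psi}(u^*)\le\tfrac12\mathcal G(t)$. The inclusion of the potential-difference term $2(\mathcal I(u)-\mathcal I(u^*))$ in $\mathcal G$ is precisely what your outline is missing.
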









Note that the result  above is also held for the equations
\begin{equation*}
\varepsilon u_{tt}-{}^{\bf H}\mathbf{D}_{T}^{\alpha,\beta;\psi}\big( \big|{}^{\bf H}\mathbf{D}^{\alpha,\beta;\psi}_{0^{+}}u\big|^{p-2}\,\,{}^{\bf H}\mathbf{D}^{\alpha,\beta;\psi}_{0^{+}}u \big)+u_{t}=f(x,t).
\end{equation*}
and 
\begin{equation}\label{Prob11}
\varepsilon u_{tt}- \frac{\partial}{\partial x}\left( \left|\frac{\partial u}{ \partial x} \right|^{p(x)-2} \frac{\partial u}{\partial x} \right)+u_{t}=f(x,t);
\end{equation}
with the conditions (\ref{eq1}).

 Consequently, all results investigated here are valid for the classic case.

As noted earlier, one of the advantages of the $\psi$-Hilfer fractional operator is that it is possible to  consider  a wide class of cases. Namely, choosing $\beta=1$ and $\psi(t)=t$ yields the problem in the Caputo fractional operator version. Even though, choosing $\beta=0$ and $\psi(t)=t$ yields the problem in the fractional Riemann-Liouville version. Furthermore, the results investigated in this paper  are valid for both of these particular cases, as well as other specific cases.


The rest of the paper is structured as follows: In Section 2, we present some concepts and prove some important ones to discuss the main result of the paper. Finally, in Section 3, we investigate the main contribution of this present manuscript, i.e., the proof of {\bf Theorem \ref{theorem 1}}.

\section{Mathematical background: preliminaries}

Let $\Omega\subset \mathbb{R}^{N}$ be an open set. We denote by $|\Omega|$ the $N$-dimensional Lebesgue measure of $\Omega$. This section is devoted to recall some preliminary definitions and results that we need in the rest of the paper.

For this aim, let us introduce the space
$$
C_+(\overline {\Omega})=\{p\in C(\overline {\Omega};\mathbb{R}):
\inf_{x\in\Omega}p(x)>1\}.
$$

The variable exponent Lebesgue space $L^{p(\cdot)}(\Omega)$ is defined by
\begin{align*}
   \mathscr{L}^{p(\cdot)}(\Omega) = \left\{u:\Omega \rightarrow \mathbb{R} \mbox{ measurable:} \displaystyle\int_\Omega |u(x)|^{p(x)}dx<\infty\right\}.
\end{align*}
$\mathscr{L}^{p(\cdot)}$ is a Banach space when endowed with the Luxemburg norm defined by
\begin{eqnarray*}
    |u|_{p(\cdot)}:=\inf\left\{\mu>0: \int_{\Omega} \left|\frac{u(x)}{\mu} \right|^{p(x)} dx \leq 1\right\}.
\end{eqnarray*}
The variable exponent Lebesgue space $\mathscr{L}^{p(\cdot)}(\Omega)$ is a special case of an Orlicz-Musielak space.

For any Lipschitz continuous function $p: \overline{\Omega}\rightarrow (1,\infty)$, let \cite{passa}
\begin{eqnarray*}
    p^{-}:= \inf_{x\in\Omega} p(x)\,\,and\,\, p^{+}:= \sup_{x\in\Omega} p(x).
\end{eqnarray*}

It is well known (see \cite{Boudjeriou1}) that for each $p_1$, $p_2\in C_+(\overline\Omega)$ such that $p_1 \leq p_2$ in ${\Omega}$,  the embedding 
$\mathscr{L}^{p_2(\cdot)}({\Omega})\hookrightarrow \mathscr{L}^{p_1(\cdot)}({\Omega})$ is continuous. Furthermore, For $p(x)\in C_+(\overline\Omega)$, if we take $p'$ such that  $\frac{1}{p(x)}+\frac{1}{p'(x)}=1$, then the H\"older inequality (see \cite{Boudjeriou1, Boudjeriou2}) is given as follows
$$ \left|\int_{\Omega}uvdx\right|\leq\left(\frac{1}{p^-}+\frac{1}{p'^-}\right)||u||_{p(x)}||v||_{p'(x)}\leq 2 ||u||_{p(x)}||v||_{p'(x)},
$$
for any $u\in \mathscr{L}^{p(x)}(\Omega)$ and  $v\in \mathscr{L}^{p'(x)}(\Omega).$

In addition, the relation between the norm $\|u\|_{p(x)}$ and its modular function $\displaystyle\int_{\Omega}|u(x)|^{p(x)}dx$ is stated as follows
$$||u||_{p(x)} \geq 1\; \Rightarrow\; ||u||^{p^{-}}_{p(x)} \leq \int_{\Omega}|u(x)|^{p(x)}dx \leq ||u||^{p^{+}}_{p(x)}.$$
$$||u||_{p(x)} < 1\; \Rightarrow\; ||u||^{p^{+}}_{p(x)} \leq \int_{\Omega}|u(x)|^{p(x)}dx \leq ||u||^{p^{-}}_{p(x)}.$$

The $\psi$-fractional space is given by \cite{Sousa1}{\color{red}
\begin{equation*}
\mathcal{H}^{\alpha,\beta;\psi}_{p(x)}(\Omega)=\left\{u\in \mathscr{L}^{p(x)}(\Omega)~:~\left\vert^{\rm H}{\bf D}^{\alpha,\beta;\,\psi}_{0+}u\right\vert\in \mathscr{L}^{p(x)}(\Omega)\right\}
\end{equation*}}
with the norm 
\begin{equation*}
||u||=||u||_{\mathcal{H}^{\alpha,\beta;\psi}_{p(x)}(\Omega)}=||u||_{\mathscr{L}^{p(x)}(\Omega)}+\left\|^{\rm H}{\bf{D}}_{0+}^{\alpha,\beta;\psi}u\right\|_{\mathscr{L}^{p(x)}(\Omega)}.
\end{equation*}

Let $\theta=(\theta_{1},\theta_{2},\theta_{3})$, $T=(T_{1},T_{2},T_{3})$ and $\alpha=(\alpha_{1},\alpha_{2},\alpha_{3})$ where $0<\alpha_{1},\alpha_{2},\alpha_{3}<1$ with $\theta_{j}<T_{j}$, for all $j\in \left\{1,2,3 \right\}$. Also put $\Lambda=I_{1}\times I_{2}\times \times I_{3}=[\theta_{1},T_{1}]\times [\theta_{2},T_{2}]\times [\theta_{3},T_{3}]$, where $T_{1},T_{2},T_{3}$ and $\theta_{1},\theta_{2},\theta_{3}$ positive constants. Consider also $\psi(\cdot)$ be an increasing and positive monotone function on $(\theta_{1},T_{1}),(\theta_{2},T_{2}),(\theta_{3},T_{3})$, having a continuous derivative $\psi'(\cdot)$ on $(\theta_{1},T_{1}],(\theta_{2},T_{2}],(\theta_{3},T_{3}]$.

On the other hand, let $u,\psi \in C^{n}(\Lambda)$ two functions such that $\psi$ is increasing and $\psi'(x_{j})\neq 0$ with $x_{j}\in[\theta_{j},T_{j}]$, $j\in \left\{1,2,3 \right\}$. The left and
right-sided $\psi$-Hilfer fractional partial derivative of $3$-variables of $u\in AC^{n}(\Lambda)$ of order $\alpha=(\alpha_{1},\alpha_{2},\alpha_{3})$ $(0<\alpha_{1},\alpha_{2},\alpha_{3}\leq 1)$ and type $\beta=(\beta_{1},\beta_{2},\beta_{3})$ where $0\leq\beta_{1},\beta_{2},\beta_{3}\leq 1$, are defined by \cite{Srivastava,Sousa4,Sousa5}
\begin{equation*}
{^{\mathbf H}{\bf D}}^{\alpha,\beta;\psi}_{\theta}u(x_{1},x_{2},x_{3})= {\bf I}^{\beta(1-\alpha),\psi}_{\theta} \Bigg(\frac{1}{\psi'(x_{1})\psi'(x_{2})\psi'(x_{3})} \Bigg(\frac{\partial^{3}} {\partial x_{1}\partial x_{2}\partial x_{3}}\Bigg) \Bigg) {\bf I}^{(1-\beta)(1-\alpha),\psi}_{\theta} u(x_{1},x_{2},x_{3})
\end{equation*}
and
\begin{equation*}
{^{\mathbf H}{\bf D}}^{\alpha,\beta;\psi}_{T}u(x_{1},x_{2},x_{3})= {\bf I}^{\beta(1-\alpha),\psi}_{T} \Bigg(-\frac{1}{\psi'(x_{1})\psi'(x_{2})\psi'(x_{3})} \Bigg(\frac{\partial^{3}} {\partial x_{1}\partial x_{2}\partial x_{3}}\Bigg) \Bigg) {\bf I}^{(1-\beta)(1-\alpha),\psi}_{T} u(x_{1},x_{2},x_{3}),
\end{equation*}
where ${\bf I}^{\alpha,\psi}_{\theta} u(x_{1},x_{2},x_{3})$ and ${\bf I}^{\alpha,\psi}_{T} u(x_{1},x_{2},x_{3})$ there are the $\psi$-Riemann-Liouville fractional integrals of $u\in \mathscr{L}^{1}(\Lambda)$ of order $\alpha$ $(0<\alpha<1)$ given by \cite{Srivastava,Sousa4,Sousa5}    
\begin{eqnarray*}
    {\bf I}^{\alpha,\psi}_{\theta} u(x_{1},x_{2},x_{3})=\dfrac{1}{\Gamma(\alpha_{1})\Gamma(\alpha_{2})\Gamma(\alpha_{3})} \int_{\theta_{1}}^{x_{1}}
    \int_{\theta_{2}}^{x_{2}}
    \int_{\theta_{3}}^{x_{3}}
    \psi'(s_{1})\psi'(s_{2})\psi'(s_{3})
    (\psi(x_{1})- \psi(s_{1}))^{\alpha_{1}-1}\notag\\
    \times
    (\psi(x_{2})- \psi(s_{2}))^{\alpha_{2}-1}
    (\psi(x_{3})- \psi(s_{3}))^{\alpha_{3}-1}
    u(s_{1},s_{2},s_{3}) ds_{3}ds_{2}ds_{1},
\end{eqnarray*}
to $\theta_{1}<s_{1}<x_{1}, \theta_{2}<s_{2}<x_{2}, \theta_{3}<s_{3}<x_{3}$ and
\begin{eqnarray*}
    {\bf I}^{\alpha,\psi}_{T} u(x_{1},x_{2},x_{3})=\dfrac{1}{\Gamma(\alpha_{1})\Gamma(\alpha_{2})\Gamma(\alpha_{3})} \int_{x_{1}}^{T_{1}}
    \int_{x_{2}}^{T_{2}}
    \int_{x_{3}}^{T_{3}}
    \psi'(s_{1})\psi'(s_{2})\psi'(s_{3})
    (\psi(s_{1})-\psi(x_{1}))^{\alpha_{1}-1}\notag\\
    \times
    (\psi(s_{2})-\psi(x_{2}))^{\alpha_{2}-1}
    (\psi(s_{3})-\psi(x_{3}))^{\alpha_{3}-1}
    u(s_{1},s_{2},s_{3}) ds_{3}ds_{2}ds_{1},
\end{eqnarray*}
with $x_{1}<s_{1}<T_{1}, x_{2}<s_{2}<T_{2}, x_{3}<s_{3}<T_{3}$, $x_{1}\in[\theta_{1},T_{1}]$, $x_{2}\in[\theta_{2},T_{2}]$ and $x_{3}\in[\theta_{3},T_{3}]$. For a study of $N$-variables, see \cite{Srivastava}.

Let $\theta=(\theta_{1},\theta_{2}, \theta_{3})$, $T=(T_{1},T_{2},T_{3})$ and $\alpha=(\alpha_{1},\alpha_{2},\alpha_{3})$. The relation
\begin{eqnarray}\label{eq.218}
&&\int_{a}^{b}\int_{c}^{d} \int_{e}^{f}\left( {\bf I}_{\theta}^{\alpha ;\psi }\varphi\left( \xi_{1},\xi_{2},\xi_{3}\right) \right) \phi\left( \xi_{1},\xi_{2},\xi_{3}\right) d\xi_{3} d\xi_{2} d\xi_{1}\notag\\&=&\int_{a}^{b}\int_{c}^{d}\int_{e}^{f}\varphi\left( \xi_{1},\xi_{2},\xi_{3}\right) \psi ^{\prime }\left( \xi_{1}\right) \psi'(\xi_{2}) \psi'(\xi_{3}) {\bf I}_{T}^{\alpha ;\psi }\left( \frac{\phi\left( \xi_{1},\xi_{2},\xi_{3}\right) }{\psi ^{\prime }\left( \xi_{1}\right)\psi'(\xi_{2}) \psi'(\xi_{3}) }\right) d \xi_{3} d\xi_{2} d\xi_{1}
\end{eqnarray}%
is valid.

Whereas, let  $\psi(\cdot)$ be an increasing and positive monotone function on $[a,b]\times [c,d]\times [e,f]$, having a continuous derivative $\psi'(\cdot)\neq 0$ on $(a,b)\times(c,d)\times(e,f)$. If $0<\alpha=(\alpha_{1},\alpha_{2},\alpha_{3}) <1$ and $0\leq \beta=(\beta_{1},\beta_{2},\beta_{3}) \leq 1$, then
\begin{eqnarray}\label{mera}
&&\int_{a}^{b}\int_{c}^{d} \int_{e}^{f}\left( ^{{\bf H}}\mathbf{D}_{\theta}^{\alpha,\beta ;\psi }\varphi\left( \xi_{1},\xi_{2},\xi_{3}\right) \right) \phi\left( \xi_{1},\xi_{2},\xi_{3}\right) d\xi_{2} d\xi_{1}\notag\\&&=\int_{a}^{b}\int_{c}^{d}\int_{e}^{f}\varphi\left( \xi_{1},\xi_{2},\xi_{3}\right) \psi ^{\prime }\left(\xi_{1}\right) \psi ^{\prime }\left(\xi_{2}\right) \psi'(\xi_{3}) \text{ }^{{\bf H}}\mathbf{D}_{T}^{\alpha ,\beta ;\psi }\left( \frac{\phi\left( \xi_{1},\xi_{2},\xi_{3}\right) }{\psi ^{\prime }\left( \xi_{1}\right)\psi ^{\prime }\left( \xi_{2}\right) \psi'(\xi_{3}) }  \right) d \xi_{3} d\xi_{2}d\xi_{1}
\end{eqnarray}
for any $\varphi\in AC^{1}$ and $\phi\in C^{1}$ satisfying the boundary conditions $\varphi\left( a,c,e\right)=0=\varphi\left( b,d,f\right)$.

For the proof of the next result, we will use the notation $^{\rm H}{\bf D}^{\alpha,\beta;,\psi}_{0+} (\cdot) =  \partial^{\alpha,\beta;\psi}{x_{j}}(\cdot)$, where it is the $\psi$-Hilfer fractional derivative with respect to the variable $x_j$, with $j=1,2,3,...,n$.

{\color{red} Note that for $u\in \mathcal{H}^{\alpha,\beta;\psi}_{p(x)}(\Omega)$, $\partial^{\alpha,\beta;\psi}{x_{j}}u$ denotes the $j$-th weak fractional derivative of $u,$ which is given by}
\begin{eqnarray*}
    \int_{\Omega} u\,\, \partial^{\alpha,\beta;\psi}_{x_{j}} \phi dx = -\int_{\Omega} \partial^{\alpha,\beta;\psi}_{x_{j}} u\, \phi dx,
\end{eqnarray*}
for all $ \phi\in C^{\infty}_{0}(\Omega)$.

\begin{proposition}\label{bana} {\rm\cite{Sousa1,Srivastava}} The space $\mathscr{L}^{p(x)}(\Omega)$ is separable and reflexive Banach spaces.
\end{proposition}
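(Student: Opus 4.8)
\emph{Proof plan.} Write $\varrho(u):=\int_\Omega|u(x)|^{p(x)}\,dx$ for the modular, and note that throughout the decisive structural fact is $1<p^-\le p(x)\le p^+<\infty$, which holds here because $p\in C_+(\overline\Omega)$ and $\Omega=(0,L)$ is bounded. The three assertions are treated in turn, the first two being routine and the last carrying the content. For the Banach property, the norm axioms for $|\cdot|_{p(\cdot)}$ are immediate from the Luxemburg infimum except the triangle inequality, which follows from convexity of $\varrho$: if $|u|_{p(\cdot)}=a$ and $|v|_{p(\cdot)}=b$ then $\varrho\big(\tfrac{u+v}{a+b}\big)\le\tfrac{a}{a+b}\varrho\big(\tfrac ua\big)+\tfrac{b}{a+b}\varrho\big(\tfrac vb\big)\le1$. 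For completeness, from a Cauchy sequence I would extract $\{u_{n_k}\}$ with $|u_{n_{k+1}}-u_{n_k}|_{p(\cdot)}\le2^{-k}$; Fatou applied to $\varrho$ of the partial sums of $\sum_k|u_{n_{k+1}}-u_{n_k}|$ shows this series lies in $\mathscr L^{p(\cdot)}(\Omega)$, so $\{u_{n_k}\}$ converges a.e. and, being dominated, in modular, hence --- since $p^+<\infty$ --- in norm to some $u\in\mathscr L^{p(\cdot)}(\Omega)$, and then the whole sequence converges to $u$.

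For separability I would show that simple functions are dense and then thin the family out. Given $u$, pick simple $s_k\to u$ a.e. with $|s_k|\le|u|$; dominated convergence gives $\varrho(u-s_k)\to0$, hence $|u-s_k|_{p(\cdot)}\to0$ (again $p^+<\infty$); and any indicator $\chi_E$ is approximated in norm by $\chi_R$, with $R$ ranging over a fixed countable ring generating the Lebesgue sets of $\Omega$ (finite unions of dyadic subintervals), using $\varrho(\chi_E-\chi_R)=|E\triangle R|$. Rational linear combinations of these $\chi_R$ form a countable dense set.

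Reflexivity is the real point, and the plan is to prove that $\mathscr L^{p(\cdot)}(\Omega)$ is uniformly convex and then invoke the Milman--Pettis theorem. The key lemma is a uniform modular Clarkson inequality: there is $\delta(\eta)>0$, depending only on $p^-$ and $p^+$, such that $\varrho(u)=\varrho(v)=1$ and $\varrho\big(\tfrac{u-v}{2}\big)\ge\eta$ force $\varrho\big(\tfrac{u+v}{2}\big)\le1-\delta(\eta)$; one splits $\Omega$ into $\{x:p(x)\ge2\}$, where the pointwise estimate $\big|\tfrac{a+b}{2}\big|^{p}+\big|\tfrac{a-b}{2}\big|^{p}\le\tfrac12(|a|^{p}+|b|^{p})$ is available, and $\{x:1<p(x)<2\}$, where the dual Clarkson inequality is used, while keeping the exponent-dependent constants uniform over $[p^-,p^+]$. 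Translating these modular statements into norm statements via the norm--modular comparisons recalled just before Proposition~\ref{bana} (on the unit sphere $\|u\|_{p(\cdot)}=1\iff\varrho(u)=1$) yields the $(\eta,\delta)$ definition of uniform convexity of $\|\cdot\|_{p(\cdot)}$, hence reflexivity. Alternatively one identifies $(\mathscr L^{p(\cdot)}(\Omega))^{*}$ with $\mathscr L^{p'(\cdot)}(\Omega)$, $\tfrac1{p(x)}+\tfrac1{p'(x)}=1$, through the pairing $\langle u,v\rangle=\int_\Omega uv$ --- the norm-equivalence coming from the H\"older inequality quoted above together with a near-extremal choice of $v$ --- and, since $(p')'=p$ and $p^+<\infty$, checks that the canonical embedding into the bidual is surjective.

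The only genuine difficulty lies in this last step, and it is the \emph{uniformity} in the exponent: the constant-exponent Clarkson constants (equivalently the modulus of convexity of $L^{p}$) degenerate as $p\downarrow1$ or $p\uparrow\infty$, so one must bound them in terms of $p^-$ and $p^+$ alone --- which is exactly where the hypotheses $1<p^-$ and $p^+<\infty$ enter, and both separability and reflexivity genuinely fail when $p^+=\infty$. Note that the stronger assumption $p^-\ge2$ from \eqref{eq2} is not needed for this proposition.
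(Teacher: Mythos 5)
The paper does not prove this proposition at all: it is quoted as a known result with the citation to \cite{Sousa1,Srivastava}, and the proof that actually appears under the (duplicated) label is for the \emph{next} proposition, on $\mathcal{H}^{\alpha,\beta;\psi}_{p(x)}(\Omega)$, which takes Proposition~\ref{bana} as an input. Your proposal therefore supplies something the paper omits, and it follows the standard route of the variable-exponent literature (Kov\'a\v{c}ik--R\'akosn\'{\i}k, Fan--Zhao, R\r{u}\v{z}i\v{c}ka): completeness via a rapidly Cauchy subsequence and Fatou for the modular, separability via simple functions over a countable ring, and reflexivity from $1<p^-\le p^+<\infty$. All of this is correct in outline, and your observation that only $p^->1$ (not the $p^-\ge 2$ of \eqref{eq2}) is needed is accurate. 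The one step you should not present as routine is the uniform convexity on the set $\{1<p(x)<2\}$: the second Clarkson inequality there is a \emph{norm} inequality with an outer exponent $p'/p$, so it does not integrate pointwise into a modular inequality the way the first one does on $\{p(x)\ge 2\}$; one either uses Hanner-type pointwise estimates with constants uniform over $[p^-,2]$, or the notion of a uniformly convex modular, or simply bypasses uniform convexity altogether via your alternative duality argument $(\mathscr{L}^{p(\cdot)})^*\cong\mathscr{L}^{p'(\cdot)}$ with $(p')'=p$ and surjectivity of the canonical embedding --- which is the cleaner of the two routes and is what several of the standard references actually do. With that caveat your plan is sound and, if anything, more informative than the paper's bare citation.
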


\begin{proposition}\label{bana} The space $\mathcal{H}^{\alpha,\beta;\,\psi}_{p(x)}(\Omega)$ are separable and reflexive Banach spaces is $p^{-}>1$.
\end{proposition}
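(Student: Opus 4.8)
The plan is to deduce that $\mathcal{H}^{\alpha,\beta;\psi}_{p(x)}(\Omega)$ is a separable reflexive Banach space from the corresponding properties of $\mathscr{L}^{p(x)}(\Omega)$ (Proposition~\ref{bana}, i.e. the previous statement), by realizing $\mathcal{H}^{\alpha,\beta;\psi}_{p(x)}(\Omega)$ as a closed subspace of a product of two copies of $\mathscr{L}^{p(x)}(\Omega)$ via the isometry $J u = \bigl(u,\, {}^{\rm H}{\bf D}^{\alpha,\beta;\psi}_{0+}u\bigr)$. First I would verify that $\mathcal{H}^{\alpha,\beta;\psi}_{p(x)}(\Omega)$ is complete: if $(u_n)$ is Cauchy in the $\|\cdot\|_{\mathcal{H}}$-norm, then $(u_n)$ and $({}^{\rm H}{\bf D}^{\alpha,\beta;\psi}_{0+}u_n)$ are each Cauchy in $\mathscr{L}^{p(x)}(\Omega)$, hence converge to some $u$ and $v$ in $\mathscr{L}^{p(x)}(\Omega)$; the point is to show $v = {}^{\rm H}{\bf D}^{\alpha,\beta;\psi}_{0+}u$ in the weak sense. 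This is where the integration-by-parts identity \eqref{mera} (its one-variable restriction) is used: for every test function $\phi\in C^\infty_0(\Omega)$ one has $\int_\Omega u_n\,\partial^{\alpha,\beta;\psi}_{x}\phi\,dx = -\int_\Omega \partial^{\alpha,\beta;\psi}_{x}u_n\,\phi\,dx$, and passing to the limit using the Hölder inequality stated above (so that integration against the fixed $L^{p'(x)}$ function $\phi$ or $\partial^{\alpha,\beta;\psi}_{x}\phi$ is continuous under $L^{p(x)}$-convergence) yields $\int_\Omega u\,\partial^{\alpha,\beta;\psi}_{x}\phi\,dx = -\int_\Omega v\,\phi\,dx$, i.e. $v$ is the weak $\psi$-Hilfer derivative of $u$. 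Hence $u\in\mathcal{H}^{\alpha,\beta;\psi}_{p(x)}(\Omega)$ and $u_n\to u$ in $\mathcal{H}$, proving completeness.

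Next I would show that $J$ is a linear isometry from $\mathcal{H}^{\alpha,\beta;\psi}_{p(x)}(\Omega)$ onto its image $M := J\bigl(\mathcal{H}^{\alpha,\beta;\psi}_{p(x)}(\Omega)\bigr) \subset \mathscr{L}^{p(x)}(\Omega)\times \mathscr{L}^{p(x)}(\Omega)$, where the product carries the sum norm $\|(f,g)\| = \|f\|_{p(x)} + \|g\|_{p(x)}$; this is immediate from the definition of $\|\cdot\|_{\mathcal{H}}$. The completeness just established shows $M$ is a closed subspace of the product space. By Proposition~\ref{bana}, $\mathscr{L}^{p(x)}(\Omega)$ is separable and reflexive (here $p^->1$ is exactly what guarantees reflexivity); a finite product of separable reflexive Banach spaces is again separable and reflexive, and a closed subspace of a separable reflexive space is separable and reflexive (subspaces of separable metric spaces are separable, and closed subspaces of reflexive spaces are reflexive). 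Transporting these properties back through the isometric isomorphism $J^{-1}\colon M\to\mathcal{H}^{\alpha,\beta;\psi}_{p(x)}(\Omega)$ gives the claim.

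The main obstacle, and the only non-formal step, is the closedness/weak-derivative-passes-to-the-limit argument, i.e. checking that the weak $\psi$-Hilfer fractional derivative is stable under $L^{p(x)}$-limits; this rests on having the integration-by-parts relation \eqref{mera} available for the admissible test functions and on the continuity of the pairing $(u,\phi)\mapsto\int_\Omega u\phi\,dx$ furnished by the variable-exponent Hölder inequality. Everything else (isometry, product and subspace stability of separability and reflexivity) is standard Banach-space bookkeeping. One should also remark that the stated hypothesis "$p^->1$" is used precisely to invoke reflexivity of $\mathscr{L}^{p(x)}(\Omega)$ from Proposition~\ref{bana}; without it one still gets a separable Banach space but not reflexivity.
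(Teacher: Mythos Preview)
Your proposal is correct and follows essentially the same route as the paper: completeness is obtained by taking a Cauchy sequence, using H\"older's inequality to pass the weak $\psi$-Hilfer derivative identity to the limit (the paper phrases the identification $v=\partial^{\alpha,\beta;\psi}_{x}u$ via the Du Bois--Reymond lemma, which is exactly your ``$v$ is the weak derivative of $u$'' step), and separability/reflexivity is deduced by the isometric embedding $u\mapsto(u,\partial^{\alpha,\beta;\psi}_{x}u)$ into a finite product of $\mathscr{L}^{p(x)}(\Omega)$ spaces and invoking that closed subspaces of separable reflexive Banach spaces inherit both properties (the paper cites Brezis, Propositions~III.17 and~III.22, for this).
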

\begin{proof} Let $\left\{ u_{m}\right\}\subset  \mathcal{H}^{\alpha,\beta;\psi}_{p(x)}(\Omega)$ a Cauchy sequence. Then $\left\{ u_{m}\right\}$ and $\left\{ \partial^{\alpha,\beta;\psi}_{x_{j}} u_{m}\right\}$, $j=1,2,...,N$ are Cauchy sequences in $\mathscr{L}^{p(x)}(\Omega)$. Since $\mathscr{L}^{p(x)}(\Omega)$ is a Banach space, there are $u, w_{j}\in \mathscr{L}^{p(x)}(\Omega) $ such that{\color{red}
\begin{eqnarray}\label{eq2.1}
    u_{n}\rightarrow u\,\,and\,\, \partial^{\alpha,\beta;\psi}_{x_{j}} u_{n}\rightarrow\, w_{j}\,\,in\,\,\mathscr{L}^{p(x)}(\Omega),
\end{eqnarray}}
when $n\rightarrow\infty$ with $j=1,2,...,N$.

Using Holder's inequality, we have
\begin{eqnarray}\label{eq2.2}
    \int_{\Omega} (u_{n}-u)\, \partial^{\alpha,\beta;\psi}_{x_{j}} \phi dx \leq C ||u_{n}-u||_{p(x)} \left\|\partial^{\alpha,\beta;\psi}_{x_{j}} \phi \right\|
\end{eqnarray}
to $\phi\in C^{\infty}_{0}(\Omega)$, where $C=\dfrac{1}{p^{-}}+\dfrac{1}{q^{-}}$. It follows from Eq.(\ref{eq2.1}) and Eq.(\ref{eq2.2}) that
\begin{eqnarray}\label{eq2.3}
    \int_{\Omega} u_{n} \partial^{\alpha,\beta;\psi}_{x_{j}} \phi dx \rightarrow \int_{\Omega} u\, \partial^{\alpha,\beta;\psi}_{x_{j}} \phi dx
\end{eqnarray}
to $\phi\in C^{\infty}_{0}(\Omega)$, when $n\rightarrow\infty$. 

Similarly, 
\begin{eqnarray}\label{eq2.4}
    \int_{\Omega} \partial^{\alpha,\beta;\psi}_{x_{j}} u_{n}\,\phi dx \rightarrow \int_{\Omega} w_{j}\, \phi dx
\end{eqnarray}
to $\phi \in C^{\infty}_{0}(\Omega)$, when $n\rightarrow\infty$.

Since that
\begin{eqnarray}\label{eq2.5}
    \int_{\Omega} u_{n} \partial^{\alpha,\beta;\psi}_{x_{j}} \phi dx =- \int_{\Omega} \partial^{\alpha,\beta;\psi}_{x_{j}} u_{n}\,\phi dx
\end{eqnarray}
for all $\phi\in C^{\infty}_{0}(\Omega)$. Passing to the limit in (\ref{eq2.5}) of $n\rightarrow\infty$ and using Eq.(\ref{eq2.3}) and Eq.(\ref{eq2.4}), yields
\begin{eqnarray}\label{eq2.6}
    \int_{\Omega}u \partial^{\alpha,\beta;\psi}_{x_{j}} \phi dx =-\int_{\Omega} w_{j}\, \phi dx
\end{eqnarray}
to $\phi \in C^{\infty}_{0}(\Omega)$.

Using the Du Bois-Reymond lemma in (\ref{eq2.6}), we conclude that $u\in \mathcal{H}^{\alpha,\beta;\psi}_{p(x)}(\Omega)$ and $w_{j}=\partial^{\alpha,\beta;\psi}_{x_{j}}u$, $j=1,2,...,N$. So, using Eq.(\ref{eq2.1}), we have
\begin{eqnarray*}
    \left\|u_{n}-u \right\|_{*}= \left\|u_{n}-u \right\|_{p(x)}+\sum_{j=1}^{N} \left\| \partial^{\alpha,\beta;\psi}_{x_{j}} u_{n}-\,\partial^{\alpha,\beta;\psi}_{x_{j}} u \right\|_{p(x)}\rightarrow 0
\end{eqnarray*}
when $n\rightarrow 0$. Therefore, $\mathcal{H}^{\alpha,\beta;\psi}_{p(x)}(\Omega)$ is a Banach space.

Now, let's prove that it is separable and reflexive. Note that the space $E= L^{p(x)}(\Omega)\times\cdots \times L^{p(x)}(\Omega)$, $(n+1)-$times, equipped with the norm $||\cdot||$ is reflective and separable. Define the linear operator $T:\mathcal{H}^{\alpha,\beta;\psi}_{p(x)} (\Omega)\rightarrow E$ to $T(u)= \left(u, \partial^{\alpha,\beta;\psi}_{x} u\right)$. Note that $||Tu||=||u||$. From this last equality we conclude that $T \left( \mathcal{H}^{\alpha,\beta;\psi}_{p(x)}(\Omega)\right)$ is a closed subspace of $E$. From the Proposition III.17 and III. 22 of \cite{brezis} we have $T \left( \mathcal{H}^{\alpha,\beta;\psi}_{p(x)}(\Omega)\right)$ it is reflexive and separable. Therefore, $\mathcal{H}^{\alpha,\beta;\psi}_{p(x)}(\Omega)$ is reflexive and separable. 
\end{proof}

{\color{red} To understand our technique easier, we only consider $f(x,t)=g(x)=A p_{0}(x)$  in which $p_{0}(x)$ represents the static pressure.}

{\color{red} Consider $\varepsilon u_{tt}=\mathcal{M}+g(x,t)$ where $\mathcal{M}:= {}^{\bf H}\mathbf{D}_{T}^{\alpha,\beta;\psi}\big( \big|{}^{\bf H}\mathbf{D}^{\alpha,\beta;\psi}_{0^{+}}u\big|^{p(x)-2}\,\,{}^{\bf H}\mathbf{D}^{\alpha,\beta;\psi}_{0^{+}}u \big)-u_{t}$.

The Galerkin approximations of solutions to problem (\ref{Prob1}) with the condition (\ref{eq1}) are sought in the form
\begin{equation}\label{25}
    u^{(n)}= \sum_{k=1}^{n}u_{k}(t)\psi_{k}(t),\,\, u_{k}=\left(u(x,t),\psi_{k}(t) \right)_{\mathcal{H}_{2}^{\alpha,\beta;\psi}(\Omega)}.
\end{equation}

We assume also
\begin{equation}\label{26}
    u_{1}^{(n)}\rightarrow u_{1},\,\,strongly\,\,in\,\, L^{2}(\Omega),\,\,u_{0}^{(n)}\rightarrow u_{0},\,\,strongly\,\,in\,\,\mathcal{H}^{\alpha,\beta;\psi}_{2}(\Omega).
\end{equation}

The coefficient $u_{k}(t)$ are defined from the relation
\begin{equation}\label{27}
    \int_{0}^{L} \left(\varepsilon u_{tt}^{(n)}-\mathcal{M} u^{(n)}-g\right)\psi_{k}=0,\,\,k=1,2,...,n.
\end{equation}

Last equalities and the initial conditions lead us to the Cauchy problem for the system of $n$ ordinary differential equations of the second order for the coefficients $u_{k}(t)$ 
\begin{eqnarray}\label{28}
    u''_{k}= \mathcal{G}_{k}(t,u(t),..., u_{n}(t))
\end{eqnarray}
and
\begin{eqnarray}\label{29}
    u_{k}(0)=\int_{0}^{L} u_{0} \psi_{k},\,\,u'_{k}(0)=\int_{0}^{L} u_{1}\psi_{k},\,\,\, k=1,2,...,n
\end{eqnarray}
where
\begin{eqnarray*}
    \mathcal{G}_{k}=\int_{0}^{L} \left(\left|{}^{\bf H}\mathbf{D}^{\alpha,\beta;\psi}_{0^{+}} u^{(n)}(x)\right|^{p(x)-2}\,\,{}^{\bf H}\mathbf{D}^{\alpha,\beta;\psi}_{0^{+}} u^{(n)} \right)\,{}^{\bf H}\mathbf{D}^{\alpha,\beta;\psi}_{0^{+}}\psi_{k}dx+ \int_{0}^{L} (g\psi_{k}- u_{t}^{(n)}\psi_{k})dx.
\end{eqnarray*}

Using the Peano’s Theorem, for every finite $n$ the problem (\ref{28}), (\ref{29}) has a solution $u_{k}(t), k=1,...,n$ on an interval $(0,T_{n})$ for each $n$. The estimates below allow one to take $T_{n}=T$ for all $n$.

Multiplying each of equations (\ref{28}) by $c'_{k}(t)$ and summing over $k = 1,...,n$, we arrive at the relation
\begin{eqnarray}\label{30}
    &&\frac{d}{dt}\int_{0}^{L} \left(\frac{\varepsilon}{2} \left|u_{t}^{(n)}\right|^{2}+ \frac{\left|{}^{\bf H}\mathbf{D}^{\alpha,\beta;\psi}_{0^{+}} u^{(n)} \right|^{p(x)}}{p(x)} - u^{(n)} g(x)\right)dx+\int_{0}^{L} \int_{0}^{L} \left|u^{(n)}\right|^{2} dx= \int_{0}^{L} g(x) u_{t}^{(n)}dx.\notag\\
\end{eqnarray}

Omitting the index $n$ for simplicity, we define the energy functional associated to the problem \eqref{Prob1} as follows
\begin{equation}\label{eq3}
\mathbb{E}_{p(x)}^{\alpha, \beta;\psi}(t)=\frac{\varepsilon}{2}\int_{0}^{L}\left|u_{t}\right|^{2}dx+\int_{0}^{L}\frac{1}{p(x)}\left|{}^{\bf H}\mathbf{D}^{\alpha,\beta;\psi}_{0^{+}}u\right|^{p(x)}dx-\int_{0}^{L}ug(x)dx. 
\end{equation}

Note that, the Eq.(\ref{30}) can write the following form
\begin{eqnarray}\label{32}
    \left( \mathbb{E}_{p(x)}^{\alpha, \beta;\psi}(t)\right)'+ \int_{0}^{L}|u_t|^{2} dx= \int_{0}^{L} g(x) u_{t}dx.
\end{eqnarray}

Taking $g=0$ and using Eq.(\ref{32}), we have the following result:}

{\color{red}
\begin{lemma}\label{lemma1}
Assume that the condition \eqref{eq2} hold and $u\in L^{\infty}(0,T; \mathcal{H}_{p(x)}^{\alpha,\beta;\psi}(\Omega))$ is a solution of the problem \eqref{Prob1} such that $u_{t}\in L^{\infty}(0,T;L^{2}(\Omega))$ and $u_{tt}\in L^{\infty}\left(0,T;\mathcal{H}_{\frac{p(x)}{p(x)-1}}^{\alpha,\beta;\psi}(\Omega)\right)$. Then the energy functional $\mathbb{E}_{p(x)}^{\alpha, \beta;\psi}(t)$ satisfies the following inequality
\begin{equation}\label{eq4}
\mathbb{E}_{p(x)}^{\alpha, \beta;\psi}(t)+\int_{0}^{t}\int_{0}^{L}\left|u_{s}\right|^{2}dxds\leq \mathbb{E}_{p(x)}^{\alpha, \beta;\psi}(0),\; 0\leq t\leq T.
\end{equation}
\end{lemma}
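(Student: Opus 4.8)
The statement to be proved is the energy inequality \eqref{eq4}, and the natural route is simply to integrate the energy identity \eqref{32} in time after taking $g=0$. The plan is as follows. First, with $g=0$, identity \eqref{32} collapses to
\begin{equation*}
\bigl(\mathbb{E}_{p(x)}^{\alpha,\beta;\psi}(t)\bigr)' + \int_0^L |u_t|^2\,dx = 0 ,
\end{equation*}
which already shows that $t\mapsto \mathbb{E}_{p(x)}^{\alpha,\beta;\psi}(t)$ is nonincreasing. Then I would integrate this relation over $[0,t]$ for $0\le t\le T$, using the fundamental theorem of calculus on the first term, to obtain
\begin{equation*}
\mathbb{E}_{p(x)}^{\alpha,\beta;\psi}(t) - \mathbb{E}_{p(x)}^{\alpha,\beta;\psi}(0) + \int_0^t\int_0^L |u_s|^2\,dx\,ds = 0 ,
\end{equation*}
and rearranging gives exactly \eqref{eq4} (indeed with equality, so the inequality is a fortiori true).

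\textbf{Justifying the computations.} Before integrating, I would make sure the energy identity \eqref{32} is legitimate under the stated regularity. The identity \eqref{30}, hence \eqref{32}, was derived at the Galerkin level by testing equation \eqref{27} against $u_k'(t)$ and summing; the key step is the integration-by-parts formula \eqref{mera} for the $\psi$-Hilfer operator, which converts $\int_0^L {}^{\bf H}\mathbf{D}_T^{\alpha,\beta;\psi}(|{}^{\bf H}\mathbf{D}_{0^+}^{\alpha,\beta;\psi}u|^{p(x)-2}{}^{\bf H}\mathbf{D}_{0^+}^{\alpha,\beta;\psi}u)\,u_t\,dx$ into $-\int_0^L |{}^{\bf H}\mathbf{D}_{0^+}^{\alpha,\beta;\psi}u|^{p(x)-2}{}^{\bf H}\mathbf{D}_{0^+}^{\alpha,\beta;\psi}u\cdot{}^{\bf H}\mathbf{D}_{0^+}^{\alpha,\beta;\psi}u_t\,dx$ using the homogeneous boundary condition from \eqref{eq1}, and then recognizes the last integral as $-\frac{d}{dt}\int_0^L \frac{1}{p(x)}|{}^{\bf H}\mathbf{D}_{0^+}^{\alpha,\beta;\psi}u|^{p(x)}\,dx$ (differentiating $s\mapsto s^{p(x)}/p(x)$ pointwise in $x$ and in $t$). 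The regularity hypotheses $u\in L^\infty(0,T;\mathcal{H}_{p(x)}^{\alpha,\beta;\psi}(\Omega))$, $u_t\in L^\infty(0,T;L^2(\Omega))$ and $u_{tt}\in L^\infty(0,T;\mathcal{H}_{p(x)/(p(x)-1)}^{\alpha,\beta;\psi}(\Omega))$ are precisely what is needed to pair the $u_{tt}$ term with $u_t$ via the Hölder inequality for variable exponents (the dual pairing of $\mathcal{H}_{p(x)/(p(x)-1)}^{\alpha,\beta;\psi}$ with $\mathcal{H}_{p(x)}^{\alpha,\beta;\psi}$) and to ensure all the time-derivatives appearing in \eqref{32} are integrable on $[0,T]$, so that the time-integration in the previous paragraph is valid; one passes from the Galerkin approximations to the limit using \eqref{26} and the weak/weak-$*$ lower semicontinuity of the convex terms in $\mathbb{E}_{p(x)}^{\alpha,\beta;\psi}$.

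\textbf{Main obstacle.} The genuinely delicate point is not the time-integration, which is elementary once \eqref{32} is in hand, but the rigorous passage to the limit in the Galerkin scheme, i.e. showing that the limit function $u$ satisfies the energy identity (or at least the inequality) rather than just the weak formulation — this requires lower semicontinuity of $\int_0^L \frac{1}{p(x)}|{}^{\bf H}\mathbf{D}_{0^+}^{\alpha,\beta;\psi}u^{(n)}|^{p(x)}\,dx$ along the approximating sequence, which follows from convexity in the gradient variable together with the reflexivity and separability of $\mathcal{H}_{p(x)}^{\alpha,\beta;\psi}(\Omega)$ established in Proposition \ref{bana}, and likewise for the $L^2$-norm of $u_t$. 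The non-homogeneity of the variable exponent is what prevents a cleaner argument, but since we only need an inequality (not equality) in \eqref{eq4}, weak lower semicontinuity of each convex term on the left-hand side suffices, and the dissipative term $\int_0^t\int_0^L|u_s|^2$ is handled the same way. With these semicontinuity facts, taking $\liminf_{n\to\infty}$ in the integrated Galerkin identity yields \eqref{eq4}.
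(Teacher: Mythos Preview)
Your proposal is correct and follows the same route as the paper: the paper's entire argument is the one-line remark ``Taking $g=0$ and using Eq.~\eqref{32}'' preceding the lemma, i.e., set $g=0$ in the energy identity \eqref{32} and integrate in time. Your additional discussion of the Galerkin passage to the limit via weak lower semicontinuity (using Proposition~\ref{bana}) is more careful than what the paper actually writes, but it is in the spirit of the Galerkin framework \eqref{25}--\eqref{30} the paper sets up and does not change the approach.
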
}

Obviously, with the help of the Eq.\eqref{eq4} and the Poincar\'e inequality \cite{Sousa,Sousa12},
\begin{equation}\label{A1}
\|u\|_{p(\cdot)}\leq C \left\|{}^{\bf H}\mathbf{D}^{\alpha,\beta;\psi}_{0^{+}}u\right\|_{p(\cdot)},\; \text{with}\quad u\in \mathcal{H}_{p(x)}^{\alpha,\beta;\psi}
\end{equation}
we have the following result:

\begin{proposition}\label{proposition1} If all the condition of the {\bf Lemma \ref{lemma1}} are satisfied, then there exists $\Theta_{0}>0$ (constant) such that for $t>0$, we have
\begin{equation}\label{eq6}
\varepsilon \int_{0}^{L}|u_t|^{2}dx+\int_{0}^{L}\frac{1}{p(x)}\left|{}^{\bf H}\mathbf{D}^{\alpha,\beta;\psi}_{0^{+}}u\right|^{p(x)}dx+2\int_{0}^{t}\int_{0}^{L}|u_\tau|^{2}dxd\tau\leq \Theta_{0}.
\end{equation}
\end{proposition}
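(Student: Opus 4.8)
The plan is to start from the energy estimate \eqref{eq4} of \textbf{Lemma \ref{lemma1}} and absorb the indefinite source term. Writing $\mathbb{E}_{p(x)}^{\alpha, \beta;\psi}(t)$ out explicitly by means of \eqref{eq3}, inequality \eqref{eq4} reads
\[
\frac{\varepsilon}{2}\int_{0}^{L}|u_t|^{2}dx+\int_{0}^{L}\frac{1}{p(x)}\left|{}^{\bf H}\mathbf{D}^{\alpha,\beta;\psi}_{0^{+}}u\right|^{p(x)}dx+\int_{0}^{t}\int_{0}^{L}|u_s|^{2}dxds\leq \mathbb{E}_{p(x)}^{\alpha, \beta;\psi}(0)+\int_{0}^{L}u\,g(x)\,dx ,
\]
so the whole proposition reduces to bounding the right-hand side by a constant independent of $t$, after a fraction of the fractional-gradient modular has been moved back to the left.

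First I would check that $\mathbb{E}_{p(x)}^{\alpha, \beta;\psi}(0)$ is finite: by \eqref{eq2}, $u_1\in L^{2}(\Omega)$ controls the kinetic term, $u_0\in\mathcal{H}_{p(x)}^{\alpha,\beta;\psi}(\Omega)$ gives $\left|{}^{\bf H}\mathbf{D}^{\alpha,\beta;\psi}_{0^{+}}u_0\right|\in\mathscr{L}^{p(x)}(\Omega)$ hence the potential term, and $g\in\mathscr{L}^{p'(x)}(\Omega)$ together with the variable-exponent Hölder inequality of Section~2 controls $\int_0^L u_0\,g\,dx$. For the remaining flux term I would apply that same Hölder inequality and then the Poincaré inequality \eqref{A1},
\[
\int_{0}^{L}u\,g(x)\,dx\;\leq\;2\,\|u\|_{p(x)}\,\|g\|_{p'(x)}\;\leq\;2C\left\|{}^{\bf H}\mathbf{D}^{\alpha,\beta;\psi}_{0^{+}}u\right\|_{p(x)}\|g\|_{p'(x)} .
\]

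The key step is to convert the $\mathscr{L}^{p(x)}$-norm of ${}^{\bf H}\mathbf{D}^{\alpha,\beta;\psi}_{0^{+}}u$ into its modular. From the norm--modular relations recalled in Section~2 one has the uniform bound $\left\|{}^{\bf H}\mathbf{D}^{\alpha,\beta;\psi}_{0^{+}}u\right\|_{p(x)}^{p^{-}}\leq 1+\rho(t)$ with $\rho(t):=\int_{0}^{L}\left|{}^{\bf H}\mathbf{D}^{\alpha,\beta;\psi}_{0^{+}}u\right|^{p(x)}dx$. Combining this with Young's inequality $ab\leq \delta\,a^{p^{-}}+C(\delta)\,b^{\,p^{-}/(p^{-}-1)}$ applied to $a=\left\|{}^{\bf H}\mathbf{D}^{\alpha,\beta;\psi}_{0^{+}}u\right\|_{p(x)}$ and $b=2C\|g\|_{p'(x)}$ yields, for every $\delta>0$,
\[
\int_{0}^{L}u\,g(x)\,dx\;\leq\;\delta\bigl(1+\rho(t)\bigr)+C(\delta)\,\|g\|_{p'(x)}^{\,p^{-}/(p^{-}-1)} .
\]
Since $\frac{1}{p(x)}\geq\frac{1}{p^{+}}$ gives $\rho(t)\leq p^{+}\int_{0}^{L}\frac{1}{p(x)}\left|{}^{\bf H}\mathbf{D}^{\alpha,\beta;\psi}_{0^{+}}u\right|^{p(x)}dx$, the choice $\delta=\frac{1}{2p^{+}}$ lets $\delta\rho(t)$ be absorbed by one half of the potential term on the left. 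The surviving right-hand side, $\mathbb{E}_{p(x)}^{\alpha, \beta;\psi}(0)+\frac{1}{2p^{+}}+C(\delta)\|g\|_{p'(x)}^{\,p^{-}/(p^{-}-1)}=:C_{1}$, is a finite constant not depending on $t$. This leaves $\frac{\varepsilon}{2}\int_{0}^{L}|u_t|^{2}dx+\frac{1}{2}\int_{0}^{L}\frac{1}{p(x)}\left|{}^{\bf H}\mathbf{D}^{\alpha,\beta;\psi}_{0^{+}}u\right|^{p(x)}dx+\int_{0}^{t}\int_{0}^{L}|u_s|^{2}dxds\leq C_{1}$, and multiplying by $2$ gives \eqref{eq6} with $\Theta_{0}=2C_{1}$; since $C_1$ is assembled only from $\mathbb{E}_{p(x)}^{\alpha, \beta;\psi}(0)$ (where $\varepsilon$ enters), the Poincaré constant in \eqref{A1} and the fixed datum $g$, the bound is uniform in $t$.

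The only genuinely delicate point is the bookkeeping in the variable-exponent framework: the norm--modular comparison splits into the two regimes $\|\cdot\|_{p(x)}\gtrless 1$ with exponents $p^{-}$ and $p^{+}$, so one must use the uniform form $\|v\|_{p(x)}^{p^{-}}\leq 1+\int_{\Omega}|v|^{p(x)}dx$ and fix $\delta$ small in terms of $p^{+}$ alone before absorbing. Everything else — Hölder, Poincaré and Young's inequalities — is routine.
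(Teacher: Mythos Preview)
Your proof is correct and follows essentially the same route as the paper's: bound $\int_0^L u\,g\,dx$ via H\"older and the Poincar\'e inequality \eqref{A1}, then apply Young's inequality with a small parameter to absorb a fraction of the modular $\int_0^L|{}^{\bf H}\mathbf{D}^{\alpha,\beta;\psi}_{0^{+}}u|^{p(x)}dx$ back into the left-hand side, leaving a $t$-independent remainder depending only on $\mathbb{E}_{p(x)}^{\alpha,\beta;\psi}(0)$ and $\|g\|_{p'(\cdot)}$. The only cosmetic difference is the handling of the norm--modular comparison: the paper splits into the two cases $\|{}^{\bf H}\mathbf{D}^{\alpha,\beta;\psi}_{0^{+}}u\|_{p(x)}\gtrless 1$ via a $\max$ of the exponents $1/p^{\pm}$, whereas you use the equivalent uniform bound $\|v\|_{p(x)}^{p^{-}}\leq 1+\int_\Omega|v|^{p(x)}dx$, which is a slightly cleaner way to reach the same estimate.
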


\begin{proof} Using the Young inequality, there exists $C>0$ (constant), such that
\begin{align}\label{eq7}
\mathbb{E}_{p(x)}^{\alpha,\beta;\psi}(t)&\geq \frac{\varepsilon}{2} \int_{0}^{L}|u_t|^{2}dx+\int_{0}^{L}\frac{1}{p(x)}\left|{}^{\bf H}\mathbf{D}^{\alpha,\beta;\psi}_{0^{+}}u\right|^{p(x)}dx-C\|g\|_{p'(\cdot)}\left\|{}^{\bf H}\mathbf{D}^{\alpha,\beta;\psi}_{0^{+}}u\right\|_{p(\cdot)}\nonumber\\
&\geq \frac{\varepsilon}{2} \int_{0}^{L}|u_t|^{2}dx+\int_{0}^{L}\frac{1}{p(x)}\left|{}^{\bf H}\mathbf{D}^{\alpha,\beta;\psi}_{0^{+}}u\right|^{p(x)}dx-C\|g\|_{p'(\cdot)}\nonumber\\
&\times
{\color{red}max \left\{\left(\int_{0}^{L} \left|{}^{\bf H}\mathbf{D}^{\alpha,\beta;\psi}_{0^{+}}u\right|^{p(x)} dx\right)^{1/p^{+}},\left(\int_{0}^{L} \left|{}^{\bf H}\mathbf{D}^{\alpha,\beta;\psi}_{0^{+}}u\right|^{p(x)}dx \right)^{1/p^{-}}\right\}}
\nonumber\\
&\geq \frac{\varepsilon}{2} \int_{0}^{L}|u_t|^{2}dx+\int_{0}^{L}\left(\frac{1}{p(x)}-\frac{1}{2p(x)}\right)\left|{}^{\bf H}\mathbf{D}^{\alpha,\beta;\psi}_{0^{+}}u\right|^{p(x)}dx-
C max \left\{||g||^{p'^{+}}_{p'}, ||g||^{p'^{-}}_{p'}\right\}
\nonumber\\
&\geq \frac{\varepsilon}{2} \int_{0}^{L}|u_t|^{2}dx+\int_{0}^{L}\frac{1}{2p(x)}\left|{}^{\bf H}\mathbf{D}^{\alpha,\beta;\psi}_{0^{+}}u\right|^{p(x)}dx-
C max \left\{||g||^{p'^{+}}_{p'(\cdot)}, ||g||^{p'^{-}}_{p'(\cdot)}\right\}
\end{align}
where $p'(x)=\frac{p(x)}{p(x)-1}$, $p'^{_{+}}=\underset{x\in\Omega}{max}\, p'(x)$, $p'^{_{-}}=\underset{x\in\Omega}{min}\, p'(x)$. On the other hand, from the inequalities (\ref{eq4}) and (\ref{eq7}), one has
{\color{red}\begin{eqnarray*}
    &&\varepsilon\int_{0}^{L} |u_{t}|^{2}dx+\int_{0}^{L}\frac{1}{p(x)} \left|^{\rm H}{\bf D}^{\alpha,\beta;\psi}_{0+} u\right|^{p(x)} dx+ 2\int_{0}^{t}\int_{0}^{L}|u_{\tau}|^{2}dx d\tau\notag\\ &\leq&
2 \left|\mathbb{E}^{\alpha,\beta;\psi}_{p(x)}(0)\right|+2C\,max\left\{||g||_{p'(\cdot)}^{p'{_{+}}}, ||g||_{p'(\cdot)}^{p'{_{-}}} \right\}:=\Theta_{0},
\end{eqnarray*}}
where $p'(\cdot)$ is the conjugate of $p(\cdot)$.
\end{proof}

Next, we will obtain the existence of solutions to the stationary problem.

\begin{lemma}\label{lemma2}
If $g(x)\in L^{\frac{p(x)}{p(x)-1}}(\Omega)$, then $u^{*}(x)$ is the minimizer of the following functional
\begin{equation}\label{eq8}
{\mathcal{I}}_{p(x)}^{\alpha,\beta;\psi}(v)=\int_{0}^{L}\frac{1}{p(x)}\left|{}^{\bf H}\mathbf{D}^{\alpha,\beta;\psi}_{0^{+}}v\right|^{p(x)}dx-\int_{0}^{L}g(x)v dx.
\end{equation}
Moreover, it satisfies the following fractional Euler-Lagrange equation in the sense of distribution
{\color{red}\begin{eqnarray}\label{eq9}
\left\{
 \begin{array}{ll}
 -\,{}^{\bf H}\mathbf{D}_{T}^{\alpha,\beta;\psi}\left( \left|{}^{\bf H}\mathbf{D}^{\alpha,\beta;\psi}_{0^{+}}u\right|^{p(x)-2}\,\,{}^{\bf H}\mathbf{D}^{\alpha,\beta;\psi}_{0^{+}}u^{*} \right)=g(x),\; x\in \Omega\\
u^{*}(x)=0,\; x\in \partial{\Omega}.
\end{array}
\right.
\end{eqnarray}}
\end{lemma}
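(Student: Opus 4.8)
The plan is to recognize \eqref{eq8} as a standard convex minimization problem over the reflexive Banach space $\mathcal{H}_{p(x)}^{\alpha,\beta;\psi}(\Omega)$ and to apply the direct method of the calculus of variations, then extract the Euler--Lagrange equation by computing the Gateaux derivative. First I would verify that $\mathcal{I}_{p(x)}^{\alpha,\beta;\psi}$ is well defined, coercive, and weakly lower semicontinuous on $\mathcal{H}_{p(x)}^{\alpha,\beta;\psi}(\Omega)$. Coercivity follows from the modular--norm relations recalled in the preliminaries (controlling $\int_0^L \frac{1}{p(x)}|{}^{\bf H}\mathbf{D}^{\alpha,\beta;\psi}_{0^+}v|^{p(x)}dx$ from below by a power of $\|{}^{\bf H}\mathbf{D}^{\alpha,\beta;\psi}_{0^+}v\|_{p(\cdot)}$) combined with the Poincar\'e inequality \eqref{A1}, which makes $\|{}^{\bf H}\mathbf{D}^{\alpha,\beta;\psi}_{0^+}v\|_{p(\cdot)}$ an equivalent norm on $\mathcal{H}_{p(x)}^{\alpha,\beta;\psi}(\Omega)$, and from the H\"older estimate $\left|\int_0^L g v\,dx\right|\le 2\|g\|_{p'(\cdot)}\|v\|_{p(\cdot)}$ to absorb the linear term. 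Weak lower semicontinuity of the modular term is the convexity of $t\mapsto \frac{1}{p(x)}|t|^{p(x)}$ together with the compact/weak-continuity behaviour of the linear functional $v\mapsto\int_0^L g v\,dx$ on the reflexive space (Proposition \ref{bana}).

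Next, by the direct method, a minimizing sequence $\{v_n\}$ is bounded (coercivity), hence — passing to a subsequence, using reflexivity from Proposition \ref{bana} — converges weakly to some $u^{*}\in\mathcal{H}_{p(x)}^{\alpha,\beta;\psi}(\Omega)$, and weak lower semicontinuity yields $\mathcal{I}_{p(x)}^{\alpha,\beta;\psi}(u^{*})\le\liminf_n \mathcal{I}_{p(x)}^{\alpha,\beta;\psi}(v_n)=\inf$, so $u^{*}$ is the minimizer. Then I would pass to the Euler--Lagrange equation: for $\phi\in C^{\infty}_0(\Omega)$ and $s\in\mathbb{R}$, the function $s\mapsto \mathcal{I}_{p(x)}^{\alpha,\beta;\psi}(u^{*}+s\phi)$ is $C^1$ and minimized at $s=0$, so its derivative vanishes, giving
\begin{equation*}
\int_0^L \left|{}^{\bf H}\mathbf{D}^{\alpha,\beta;\psi}_{0^+}u^{*}\right|^{p(x)-2}\,{}^{\bf H}\mathbf{D}^{\alpha,\beta;\psi}_{0^+}u^{*}\,\,{}^{\bf H}\mathbf{D}^{\alpha,\beta;\psi}_{0^+}\phi\,dx=\int_0^L g(x)\phi\,dx .
\end{equation*}
Differentiating under the integral sign is justified by dominated convergence using the elementary inequality $\big||a+sb|^{p-2}(a+sb)-|a|^{p-2}a\big|\le C|s|\,|b|\,(|a|+|b|)^{p-2}$ and the fact that ${}^{\bf H}\mathbf{D}^{\alpha,\beta;\psi}_{0^+}u^{*},\,{}^{\bf H}\mathbf{D}^{\alpha,\beta;\psi}_{0^+}\phi\in\mathscr{L}^{p(x)}(\Omega)$ with $p^{-}\ge 2$ by \eqref{eq2}. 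Finally I would invoke the integration-by-parts formula \eqref{mera} (the one-variable instance, with the boundary terms vanishing because $\phi\in C^\infty_0(\Omega)$) to move the fractional derivative off $\phi$, obtaining $\int_0^L \left[-{}^{\bf H}\mathbf{D}_T^{\alpha,\beta;\psi}\big(|{}^{\bf H}\mathbf{D}^{\alpha,\beta;\psi}_{0^+}u^{*}|^{p(x)-2}{}^{\bf H}\mathbf{D}^{\alpha,\beta;\psi}_{0^+}u^{*}\big)-g(x)\right]\phi\,dx=0$ for all test functions $\phi$, which is exactly \eqref{eq9} in the sense of distributions; the homogeneous boundary condition $u^{*}=0$ on $\partial\Omega$ is encoded in membership in $\mathcal{H}_{p(x)}^{\alpha,\beta;\psi}(\Omega)$ with the zero trace (consistent with \eqref{eq1}).

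The main obstacle I anticipate is not the abstract variational scheme, which is routine, but the two places where the fractional structure enters: (i) ensuring the weak lower semicontinuity/weak-continuity statements genuinely hold in $\mathcal{H}_{p(x)}^{\alpha,\beta;\psi}(\Omega)$ — in particular that weak convergence $v_n\rightharpoonup u^{*}$ in that space forces weak convergence of the fractional derivatives ${}^{\bf H}\mathbf{D}^{\alpha,\beta;\psi}_{0^+}v_n$ in $\mathscr{L}^{p(x)}(\Omega)$ (this follows from the definition of the norm and the closedness argument already used in the proof of Proposition \ref{bana}), and that the linear term is weakly continuous; and (ii) correctly applying the $\psi$-Hilfer integration-by-parts identity \eqref{mera} with the right-sided operator ${}^{\bf H}\mathbf{D}_T^{\alpha,\beta;\psi}$ appearing on the left-sided derivative's adjoint side, and checking the regularity hypotheses there ($\varphi\in AC^1$, $\phi\in C^1$, vanishing boundary data) are met — here one takes $\phi$ as the test function and $\varphi=|{}^{\bf H}\mathbf{D}^{\alpha,\beta;\psi}_{0^+}u^{*}|^{p(x)-2}{}^{\bf H}\mathbf{D}^{\alpha,\beta;\psi}_{0^+}u^{*}$, so some care (or an a priori regularity/density argument) is needed to legitimize the formula for the minimizer, which may only be a weak solution; the cleanest route is to interpret everything distributionally from the outset, as the statement of the lemma already does.
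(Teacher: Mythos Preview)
The paper states Lemma~\ref{lemma2} without proof; it is simply asserted at the end of Section~2 and then used freely in Section~3. Your proposal via the direct method of the calculus of variations (coercivity from the modular--norm relations and Poincar\'e inequality \eqref{A1}, weak lower semicontinuity from convexity of the modular and linearity of the $g$-term, existence of a minimizer by reflexivity from Proposition~\ref{bana}, then G\^ateaux differentiation and the fractional integration-by-parts formula \eqref{mera}) is the standard route and is correct in outline; there is nothing in the paper to compare it against.
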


\section{Main results}

Before attacking the main result of the paper, it is necessary to investigate other results that are of paramount importance for the proof of {\bf Theorem \ref{theorem 1}}.

\begin{lemma}\label{lemma3} Let $u$ and $u^{*}$ be solutions of the {\rm Eq.\eqref{Prob1}} and {\rm Eq.\eqref{eq8}} respectively, then we have
$$\lim_{t\to \infty+} {\mathcal{I}}_{p(x)}^{\alpha,\beta;\psi}(u(\cdot,t))={\mathcal{I}}_{p(x)}^{\alpha,\beta;\psi}(u^{*}(\cdot)).$$
\end{lemma}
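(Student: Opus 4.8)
\textbf{Proof plan for Lemma \ref{lemma3}.}

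The plan is to exploit the monotone-decay of the energy proved in Lemma \ref{lemma1} together with the variational characterization of $u^{*}$ from Lemma \ref{lemma2}. First I would observe that, by the energy inequality \eqref{eq4}, the map $t\mapsto \mathbb{E}_{p(x)}^{\alpha,\beta;\psi}(t)$ is non-increasing and bounded below (the lower bound coming from \eqref{eq7} in the proof of Proposition \ref{proposition1}); hence it has a finite limit as $t\to\infty$. Since $\mathbb{E}_{p(x)}^{\alpha,\beta;\psi}(t)=\frac{\varepsilon}{2}\int_{0}^{L}|u_{t}|^{2}dx+\mathcal{I}_{p(x)}^{\alpha,\beta;\psi}(u(\cdot,t))$, to pin down $\lim_{t\to\infty}\mathcal{I}_{p(x)}^{\alpha,\beta;\psi}(u(\cdot,t))$ I would show that the kinetic term $\int_{0}^{L}|u_{t}|^{2}dx\to 0$ along $t\to\infty$. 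This follows from the fact that $\int_{0}^{\infty}\int_{0}^{L}|u_{s}|^{2}\,dx\,ds<\infty$ (again by \eqref{eq4}), so at least along a sequence $t_{n}\to\infty$ the kinetic energy tends to $0$; combined with the existence of $\lim_{t\to\infty}\mathbb{E}_{p(x)}^{\alpha,\beta;\psi}(t)$ this forces $\lim_{t\to\infty}\mathcal{I}_{p(x)}^{\alpha,\beta;\psi}(u(\cdot,t))=:\ell$ to exist and to equal $\lim_{n}\mathcal{I}_{p(x)}^{\alpha,\beta;\psi}(u(\cdot,t_{n}))$.

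Next I would identify $\ell$ with $\mathcal{I}_{p(x)}^{\alpha,\beta;\psi}(u^{*})$. Since $u^{*}$ is the minimizer of $\mathcal{I}_{p(x)}^{\alpha,\beta;\psi}$ by Lemma \ref{lemma2}, one inequality is immediate: $\mathcal{I}_{p(x)}^{\alpha,\beta;\psi}(u(\cdot,t))\ge \mathcal{I}_{p(x)}^{\alpha,\beta;\psi}(u^{*})$ for every $t$, hence $\ell\ge \mathcal{I}_{p(x)}^{\alpha,\beta;\psi}(u^{*})$. For the reverse inequality I would use the uniform bound from Proposition \ref{proposition1}: the family $\{u(\cdot,t)\}_{t>0}$ is bounded in $\mathcal{H}_{p(x)}^{\alpha,\beta;\psi}(\Omega)$, so along the sequence $t_{n}$ (passing to a further subsequence) $u(\cdot,t_{n})\weak w$ weakly in $\mathcal{H}_{p(x)}^{\alpha,\beta;\psi}(\Omega)$ for some $w$, using reflexivity (Proposition \ref{bana}). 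Testing the weak formulation of \eqref{Prob1} against a fixed $\phi\in C_{0}^{\infty}(\Omega)$, the term $\varepsilon\int_{0}^{L}u_{tt}\phi$ integrated suitably and the damping term $\int_{0}^{L}u_{t}\phi$ both vanish in the limit (using $u_{t}(\cdot,t_{n})\to 0$ in $L^{2}$ and the decay of $\int|u_{s}|^{2}$), so $w$ satisfies the stationary Euler--Lagrange equation \eqref{eq9}; by the strict convexity of $\mathcal{I}_{p(x)}^{\alpha,\beta;\psi}$ (which gives uniqueness of the critical point) $w=u^{*}$. Then weak lower semicontinuity of the convex modular functional $v\mapsto\int_{0}^{L}\frac{1}{p(x)}|{}^{\bf H}\mathbf{D}^{\alpha,\beta;\psi}_{0^{+}}v|^{p(x)}dx$ together with weak continuity of the linear term $v\mapsto\int_{0}^{L}g v\,dx$ yields $\mathcal{I}_{p(x)}^{\alpha,\beta;\psi}(u^{*})=\mathcal{I}_{p(x)}^{\alpha,\beta;\psi}(w)\le \liminf_{n}\mathcal{I}_{p(x)}^{\alpha,\beta;\psi}(u(\cdot,t_{n}))=\ell$. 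Combining the two inequalities gives $\ell=\mathcal{I}_{p(x)}^{\alpha,\beta;\psi}(u^{*})$, which is the claim.

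The main obstacle I anticipate is the rigorous justification that the kinetic energy actually tends to $0$ as $t\to\infty$ (not merely along a subsequence) and that the limit of $\mathcal{I}_{p(x)}^{\alpha,\beta;\psi}(u(\cdot,t))$ exists as a genuine limit rather than just a $\liminf$; this requires either some equicontinuity in $t$ of $t\mapsto\mathbb{E}_{p(x)}^{\alpha,\beta;\psi}(t)$ coming from the regularity hypotheses $u_{tt}\in L^{\infty}(0,T;\mathcal{H}_{p'(x)}^{\alpha,\beta;\psi}(\Omega))$ in Lemma \ref{lemma1}, or a Barbalat-type argument. A secondary technical point is making the weak formulation and the passage to the limit in the time-derivative terms precise, since $u$ is only known in $L^{\infty}(0,T;\cdot)$ for each finite $T$; one handles this by integrating \eqref{Prob1} in time over $(t_{n},t_{n}+1)$ against test functions and using the uniform bounds. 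Everything else is routine convexity and weak-compactness bookkeeping.
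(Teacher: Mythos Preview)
Your argument for the ``reverse inequality'' does not actually prove the reverse direction. You need $\ell \le \mathcal{I}_{p(x)}^{\alpha,\beta;\psi}(u^{*})$, but weak lower semicontinuity of the convex modular only gives $\mathcal{I}_{p(x)}^{\alpha,\beta;\psi}(w)\le \liminf_{n}\mathcal{I}_{p(x)}^{\alpha,\beta;\psi}(u(\cdot,t_{n}))$, i.e.\ $\mathcal{I}_{p(x)}^{\alpha,\beta;\psi}(u^{*})\le \ell$ --- exactly the inequality you already obtained from the minimizing property of $u^{*}$. Weak convergence together with weak lower semicontinuity can never yield an \emph{upper} bound on $\liminf_{n}\mathcal{I}_{p(x)}^{\alpha,\beta;\psi}(u(\cdot,t_{n}))$; the inequality points the wrong way. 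So the two inequalities you combine are in fact the same one, and the proof is incomplete. (A secondary gap: even the identification $w=u^{*}$ is not justified, since passing to the limit in the nonlinear term $\big|{}^{\bf H}\mathbf{D}^{\alpha,\beta;\psi}_{0^{+}}u\big|^{p(x)-2}\,{}^{\bf H}\mathbf{D}^{\alpha,\beta;\psi}_{0^{+}}u$ requires more than weak convergence of ${}^{\bf H}\mathbf{D}^{\alpha,\beta;\psi}_{0^{+}}u(\cdot,t_{n})$.)

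The paper obtains the hard direction $\limsup_{T\to\infty}\mathcal{I}_{p(x)}^{\alpha,\beta;\psi}(u(\cdot,T))\le \mathcal{I}_{p(x)}^{\alpha,\beta;\psi}(u^{*})$ by an entirely different, ODE-type device: it introduces $\phi(t)=\tfrac12\int_{0}^{L}(u-u^{*})^{2}\,dx$, computes $\varepsilon\phi''(t)+\phi'(t)$ from the equation, and bounds it above (via Young's inequality and monotonicity of the energy) by $\mathcal{I}_{p(x)}^{\alpha,\beta;\psi}(u^{*})-\mathbb{E}_{p(x)}^{\alpha,\beta;\psi}(T)+\tfrac{3\varepsilon}{2}\int_{0}^{L}|u_{t}|^{2}\,dx$ for $0<t<T$. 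Integrating this differential inequality on $[0,T]$, dividing by $T+\varepsilon e^{-T/\varepsilon}-\varepsilon$, and using $\int_{0}^{\infty}\!\int_{0}^{L}|u_{t}|^{2}\,dx\,dt<\infty$ from Proposition~\ref{proposition1} gives $\limsup_{T\to\infty}\mathbb{E}_{p(x)}^{\alpha,\beta;\psi}(T)\le \mathcal{I}_{p(x)}^{\alpha,\beta;\psi}(u^{*})$; the conclusion then follows from $\mathcal{I}_{p(x)}^{\alpha,\beta;\psi}(u(\cdot,T))\le \mathbb{E}_{p(x)}^{\alpha,\beta;\psi}(T)$. This averaging argument is what replaces the upper-semicontinuity step that your weak-compactness route cannot supply.
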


\begin{proof} For any $t>0$, $u(\cdot,t)\in \mathcal{H}_{p(x)}^{\alpha,\beta;\psi}(\Omega)$ and the {\bf Lemma \ref{lemma2}}, one has
\begin{equation*}
{\mathcal{I}}_{p(x)}^{\alpha,\beta;\psi}(u^{*}(\cdot))\leq {\mathcal{I}}_{p(x)}^{\alpha,\beta;\psi}(u(\cdot,t))
\end{equation*}
implies that
\begin{eqnarray}\label{eq10}
    {\mathcal{I}}_{p(x)}^{\alpha,\beta;\psi}(u^{*}(\cdot))\leq \liminf_{t\to \infty+}{\mathcal{I}}_{p(x)}^{\alpha,\beta;\psi}(u(\cdot,t)).
\end{eqnarray}

Conversely, we want to claim that $\limsup_{t\to \infty}{\mathcal{I}}_{p(x)}^{\alpha,\beta;\psi}(u(\cdot,t))\leq  {\mathcal{I}}_{p(x)}^{\alpha,\beta;\psi}(u^{*}(\cdot))$. To prove this inequality, set
\begin{equation*}
\phi(t):=\frac{1}{2}\int_{0}^{L}\big(u(x,t)-u^{*}(x)\big)^{2}dx.
\end{equation*}

Hence, in this sense, yields
\begin{eqnarray}\label{eq11}
\varepsilon \phi''(t)+\phi'(t)&=&-\int_{0}^{L}\big|{}^{\bf H}\mathbf{D}^{\alpha,\beta;\psi}_{0^{+}}u\big|^{p(x)-2}\,\,{}^{\bf H}\mathbf{D}^{\alpha,\beta;\psi}_{0^{+}}u \,\,{}^{\bf H}\mathbf{D}^{\alpha,\beta;\psi}_{0^{+}}(u-u^{*})dx\notag\\&+& \int_{0}^{L}g(x)(u-u^{*})dx+\varepsilon \int_{0}^{L}|u_t|^{2}dx.
\end{eqnarray}

Using the Young inequality, the monotonicity of the energy functional $\mathbb{E}_{p(x)}^{\alpha,\beta;\psi}(t)$ and the Eq.\eqref{eq11} may be rewritten as
\begin{eqnarray}\label{eq12}
&&\varepsilon \phi''(t)+\phi'(t)\notag\\&\leq& {\color{red}-\int_{0}^{L}\left|{}^{\bf H}\mathbf{D}^{\alpha,\beta;\psi}_{0^{+}}u\right|^{p(x)}dx+ \int_{0}^{L}\frac{p(x)-1}{p(x)}\left|{}^{\bf H}\mathbf{D}^{\alpha,\beta;\psi}_{0^{+}}u\right|^{p(x)}dx+ \int_{0}^{L}\frac{1}{p(x)}\left|{}^{\bf H}\mathbf{D}^{\alpha,\beta;\psi}_{0^{+}}u^{*}\right|^{p(x)}dx}\nonumber\\
&{+}&\int_{0}^{L}g(x)(u-u^{*})dx+ \varepsilon\int_{0}^{L}|u_t|^{2}dx\nonumber\\
&\leq&{\mathcal{I}}_{p(x)}^{\alpha,\beta;\psi}(u^{*}(\cdot))-{\mathcal{I}}_{p(x)}^{\alpha,\beta;\psi}(u(\cdot,t))+\varepsilon\int_{0}^{L}|u_t|^{2}dx\nonumber\\
&{=}&{\mathcal{I}}_{p(x)}^{\alpha,\beta;\psi}(u^{*}(\cdot))+\frac{3\varepsilon}{2}\int_{0}^{L}|u_t|^{2}dx-\mathbb{E}_{p(x)}^{\alpha,\beta;\psi}(t)\nonumber\\
&{\leq}&{\mathcal{I}}_{p(x)}^{\alpha,\beta;\psi}(u^{*}(\cdot))+\frac{3\varepsilon}{2}\int_{0}^{L}|u_t|^{2}dx-\mathbb{E}_{p(x)}^{\alpha,\beta;\psi}(T),\;\; 0<t<T.\notag\\
\end{eqnarray}

Furthermore, on the left side of the inequality (\ref{eq12}), to get{\color{red}
\begin{align}
\phi(t)+\mathbb{E}_{p(x)}^{\alpha,\beta;\psi}(T)\big(T+\varepsilon e^{-\frac{T}{\varepsilon}}-\varepsilon\big)&\leq \varepsilon \phi'(0)(1-e^{-\frac{T}{\varepsilon}})+\phi(0)+{\mathcal{I}}_{p(x)}^{\alpha,\beta;\psi}(u^{*}(\cdot))\big(T+\varepsilon e^{-\frac{T}{\varepsilon}}-\varepsilon\big)\nonumber\\
&+\frac{3}{2}\int_{0}^{T}\int_{0}^{t}\int_{0}^{L}|u_t|^{2}e^{\frac{\tau -t}{\varepsilon}}dxd\tau dt.\label{eq13}
\end{align}}

{\bf Affirmation:} $\dfrac{3}{2}\displaystyle\int_{0}^{T}\displaystyle\int_{0}^{t}\displaystyle\int_{0}^{L}|u_t|^{2}e^{\frac{\tau -t}{\varepsilon}}dxd\tau dt \leq \frac{3 \Theta_{0}\varepsilon}{4}$.

Note that,
\begin{equation*}
    \{(\tau,t):0\leq t\leq T,\; 0\leq \tau \leq t\}=\{(\tau,t):0\leq \tau \leq T,\; \tau \leq t \leq T\}.
\end{equation*}

Thus, for any $T>0$, one has
\begin{align}
\frac{3}{2}\int_{0}^{T}\int_{0}^{t}\int_{0}^{L}|u_t|^{2}e^{\frac{\tau -t}{\varepsilon}}dxd\tau dt&=\frac{3}{2}\int_{0}^{T}\int_{\tau}^{T}e^{\frac{\tau -t}{\varepsilon}}dt\int_{0}^{L}|u_\tau|^{2}dxd\tau\nonumber\\
&=\frac{3\varepsilon}{2}\int_{0}^{T}\int_{0}^{L}|u_\tau|^{2}dx\big(1-e^{\frac{\tau -T}{\varepsilon}}\big)d\tau \nonumber\\
&\leq \frac{3\varepsilon}{2}\int_{0}^{T}\int_{0}^{L}|u_\tau|^{2}dxd\tau\nonumber\\
&\leq \frac{3\Theta_{0}\varepsilon}{4}.\label{eq14}
\end{align}

Now, we multiply the Eq.\eqref{eq11} by $\big(T+\varepsilon e^{-\frac{T}{\varepsilon}}-\varepsilon\big)^{-1}$ and let's not consider some nonnegative terms. In this sense, yields
\begin{equation}\label{eq15}
\mathbb{E}_{p(x)}^{\alpha,\beta;\psi}(T)\leq \frac{\varepsilon \phi'(0)(1-e^{\frac{-T}{\varepsilon}})+\phi(0)}{T+\varepsilon e^{\frac{-T}{\varepsilon}}-\varepsilon}+{\mathcal{I}}_{p(x)}^{\alpha,\beta;\psi}(u^{*}(\cdot))+\frac{\frac{3\Theta_{0}\varepsilon}{4}}{T+\varepsilon e^{\frac{-T}{\varepsilon}}-\varepsilon},\;\; T>\varepsilon.
\end{equation}

In addition, we use the fact that
$${\mathcal{I}}_{p(x)}^{\alpha,\beta;\psi}(u(\cdot,T))=\mathbb{E}_{p(x)}^{\alpha,\beta;\psi}(T)-\frac{\varepsilon}{2}\left\|u_{t}\right\|_{2}^{2}\leq \mathbb{E}_{p(x)}^{\alpha,\beta;\psi}(T),$$
yields
\begin{equation}\label{eq16}
\limsup_{T\to \infty+}{\mathcal{I}}_{p(x)}^{\alpha,\beta;\psi}(u(\cdot,T))\leq \limsup_{T\to \infty+} \mathbb{E}_{p(x)}^{\alpha,\beta;\psi}(T)\leq {\mathcal{I}}_{p(x)}^{\alpha,\beta;\psi}(u^{*}(\cdot)).
\end{equation}
Our deserved results are the consequence of the Eq.\eqref{eq10} and Eq.\eqref{eq16}
\end{proof}

From {\bf Lemma \ref{lemma2}}, it is possible to establish better estimates for $\|u_t\|_{2}^{2}$ and $\left\|{}^{\bf H}\mathbf{D}^{\alpha,\beta;\psi}_{0^{+}}u\right\|_{p(\cdot)}$
with $0\leq\beta\leq1$ and $\frac{1}{p(x)}<\alpha<1.$

\begin{lemma}\label{lemma4}
Let $u$ and $u^{*}$ be solutions of problems \eqref{Prob1} and \eqref{eq9} respectively. Then
\begin{itemize}

\item[(i)] $\|u_{t}(T)\|_{L^{2}(\Omega)}\overset{T\to \infty+}{\longrightarrow}0,$

\item[(ii)]$\displaystyle\int_{0}^{L}\left|{}^{\bf H}\mathbf{D}^{\alpha,\beta;\psi}_{0^{+}}u(\cdot,T)-{}^{\bf H}\mathbf{D}^{\alpha,\beta;\psi}_{0^{+}}u^{*}(\cdot)\right|^{p(x)}dx\overset{T\to \infty+}{\longrightarrow}0$.
\end{itemize}
\end{lemma}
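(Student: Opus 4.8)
\textbf{Proof proposal for Lemma \ref{lemma4}.}

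The plan is to extract both conclusions from the monotonicity of the energy functional together with the convergence of $\mathcal{I}_{p(x)}^{\alpha,\beta;\psi}(u(\cdot,t))$ established in Lemma \ref{lemma3}. First I would recall from \eqref{eq3} that for every $t$,
\begin{equation*}
\mathbb{E}_{p(x)}^{\alpha,\beta;\psi}(t)=\frac{\varepsilon}{2}\|u_t(\cdot,t)\|_{L^2(\Omega)}^2+\mathcal{I}_{p(x)}^{\alpha,\beta;\psi}(u(\cdot,t)),
\end{equation*}
and from Lemma \ref{lemma1} that $t\mapsto\mathbb{E}_{p(x)}^{\alpha,\beta;\psi}(t)$ is nonincreasing and bounded below (by $\mathcal{I}_{p(x)}^{\alpha,\beta;\psi}(u^*)-$ something, via the coercivity estimate \eqref{eq7} of Proposition \ref{proposition1}); hence it has a finite limit $\ell$ as $t\to\infty$. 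Combining this with $\mathcal{I}_{p(x)}^{\alpha,\beta;\psi}(u(\cdot,t))\to\mathcal{I}_{p(x)}^{\alpha,\beta;\psi}(u^*)$ from Lemma \ref{lemma3}, the displayed identity forces $\frac{\varepsilon}{2}\|u_t(\cdot,t)\|_{L^2(\Omega)}^2\to\ell-\mathcal{I}_{p(x)}^{\alpha,\beta;\psi}(u^*)$. To identify this limit as $0$ I would invoke the integrated energy inequality \eqref{eq4}: since $\int_0^\infty\|u_s\|_{L^2(\Omega)}^2\,ds<\infty$ (bounded by $\mathbb{E}_{p(x)}^{\alpha,\beta;\psi}(0)$), the limit of $\|u_t(\cdot,t)\|_{L^2(\Omega)}^2$, if it exists, must be $0$. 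This gives (i), and then $\ell=\mathcal{I}_{p(x)}^{\alpha,\beta;\psi}(u^*)$.

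For (ii) I would exploit the strict convexity/uniform monotonicity of the $p(x)$-energy. Since $u^*$ minimizes $\mathcal{I}_{p(x)}^{\alpha,\beta;\psi}$ and satisfies the Euler--Lagrange equation \eqref{eq9}, testing that equation against $u(\cdot,t)-u^*$ and using the integration-by-parts identity \eqref{mera} yields
\begin{equation*}
\int_0^L\big|{}^{\bf H}\mathbf{D}^{\alpha,\beta;\psi}_{0^{+}}u^*\big|^{p(x)-2}\,{}^{\bf H}\mathbf{D}^{\alpha,\beta;\psi}_{0^{+}}u^*\;{}^{\bf H}\mathbf{D}^{\alpha,\beta;\psi}_{0^{+}}\big(u(\cdot,t)-u^*\big)\,dx=\int_0^L g(x)\big(u(\cdot,t)-u^*\big)\,dx.
\end{equation*}
Subtracting this from the expansion of $\mathcal{I}_{p(x)}^{\alpha,\beta;\psi}(u(\cdot,t))-\mathcal{I}_{p(x)}^{\alpha,\beta;\psi}(u^*)$ and applying the elementary convexity inequality
\begin{equation*}
\tfrac{1}{p}|b|^{p}-\tfrac{1}{p}|a|^{p}-|a|^{p-2}a\,(b-a)\ \geq\ c_p\,|b-a|^{p}\qquad(p\geq 2),
\end{equation*}
pointwise in $x$ with $p=p(x)\in[p^-,p^+]$, $a={}^{\bf H}\mathbf{D}^{\alpha,\beta;\psi}_{0^{+}}u^*$, $b={}^{\bf H}\mathbf{D}^{\alpha,\beta;\psi}_{0^{+}}u(\cdot,t)$, I obtain
\begin{equation*}
c\int_0^L\big|{}^{\bf H}\mathbf{D}^{\alpha,\beta;\psi}_{0^{+}}u(\cdot,t)-{}^{\bf H}\mathbf{D}^{\alpha,\beta;\psi}_{0^{+}}u^*\big|^{p(x)}dx\ \leq\ \mathcal{I}_{p(x)}^{\alpha,\beta;\psi}(u(\cdot,t))-\mathcal{I}_{p(x)}^{\alpha,\beta;\psi}(u^*),
\end{equation*}
with $c=c(p^-,p^+)>0$. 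The right-hand side tends to $0$ by Lemma \ref{lemma3}, which gives (ii).

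The main obstacle is the passage to the limit in (i): a priori $\|u_t(\cdot,t)\|_{L^2(\Omega)}^2$ need not converge just because its integral over $(0,\infty)$ is finite, so one must be a little careful. The clean way is to argue by contradiction using the already-established convergence $\frac{\varepsilon}{2}\|u_t(\cdot,t)\|_{L^2(\Omega)}^2=\mathbb{E}_{p(x)}^{\alpha,\beta;\psi}(t)-\mathcal{I}_{p(x)}^{\alpha,\beta;\psi}(u(\cdot,t))$, whose right-hand side \emph{does} converge (difference of a monotone convergent function and a convergent one); thus $\|u_t(\cdot,t)\|_{L^2(\Omega)}^2$ converges, and then its finite time-integral forces the limit to be $0$. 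A secondary technical point is justifying the test-function computation in (ii) at the level of weak solutions (density of $C_0^\infty$ in $\mathcal{H}_{p(x)}^{\alpha,\beta;\psi}(\Omega)$ and the validity of \eqref{mera} for the regularity class in Lemma \ref{lemma1}); this is routine given the embeddings and the integration-by-parts formulas recorded in Section 2.
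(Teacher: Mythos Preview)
Your proof is correct and follows essentially the same route as the paper: for (i) you use the identity $\mathbb{E}_{p(x)}^{\alpha,\beta;\psi}(t)=\tfrac{\varepsilon}{2}\|u_t\|_2^2+\mathcal{I}_{p(x)}^{\alpha,\beta;\psi}(u(\cdot,t))$, the monotone convergence of the energy, Lemma~\ref{lemma3}, and the finiteness of $\int_0^\infty\|u_t\|_2^2\,dt$ to force the limit to vanish---exactly the paper's argument; for (ii) you obtain the key inequality $c\int_0^L|{}^{\bf H}\mathbf{D}^{\alpha,\beta;\psi}_{0^{+}}(u-u^*)|^{p(x)}dx\le \mathcal{I}_{p(x)}^{\alpha,\beta;\psi}(u)-\mathcal{I}_{p(x)}^{\alpha,\beta;\psi}(u^*)$ (the paper's \eqref{eq18}) and conclude via Lemma~\ref{lemma3}. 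The only cosmetic difference is that the paper derives \eqref{eq18} by writing $\tfrac{1}{p(x)}|{}^{\bf H}\mathbf{D}u|^{p(x)}-\tfrac{1}{p(x)}|{}^{\bf H}\mathbf{D}u^*|^{p(x)}$ as a $\theta$-integral and then invoking the vector monotonicity inequality $\langle |A|^{q-2}A-|B|^{q-2}B,\,A-B\rangle\ge 2^{2-q}|A-B|^q$, whereas you invoke the equivalent scalar convexity inequality directly; both routes use the Euler--Lagrange equation \eqref{eq9} to absorb the linear term.
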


\begin{proof}
\begin{itemize}

\item[(i)] Note that, the Eq.\eqref{eq7} shows that $\mathbb{E}_{p(x)}^{\alpha,\beta;\psi}(t)$ is bounded from below. Furthermore, we have that $\lim_{t\to \infty+ }\mathbb{E}_{p(x)}^{\alpha,\beta;\psi}(t)$ is well defined, (just use the fact of $\mathbb{E}_{p(x)}^{\alpha,\beta;\psi}(t)$ to be monotone and linear bounded), because every bounded monotone sequence converges. In this sense, using the {\bf Lemma \ref{lemma2}} and 
$\varepsilon \|u_t(\cdot,T)\|_{2}^{2}=2\,\,\mathbb{E}_{p(x)}^{\alpha,\beta;\psi}(T)-2\,\,{\mathcal{I}}_{p(x)}^{\alpha,\beta;\psi}(u(\cdot,T))$, we deduce that $\lim_{T\to \infty+}\|u_t(\cdot,T)\|_{2}^{2}$ exists.

Let us  define now $a=\lim_{T\to \infty+}\|u_t(\cdot,T)\|_{2}^{2}\geq 0$. Therefore $a=0$. In fact, if not, namely, $a\neq 0$, then $\exists 
 T_{}>0$ such that for $T\geq T_{0}$, yields
\begin{eqnarray*}
    \|u_t(\cdot,T)\|_{2}^{2}\geq \frac{a}{2}>0,
\end{eqnarray*}
that's implies
\begin{equation*}
\int_{T_{0}}^{T}\|u_t(\cdot,T)\|_{2}^{2}\geq \frac{a}{2}(T-T_{0}).
\end{equation*}

In this sense, we have a contradiction, since
$$\int_{0}^{T}\int_{0}^{L}|u_t|^{2}dxdt\leq \Theta_{0}.$$ 

\item[(ii)] Consider the following inequality 
\begin{eqnarray*}
    \langle|A|^{q-2}A-|B|^{q-2}B, A-B\rangle\geq 2^{2-q}|A-B|^{q},\;\; A,B\in \mathbb{R}^{N},\;\; q\geq 2.
\end{eqnarray*}

For $p(x)\geq 2$, we have
\begin{eqnarray}\label{eq17}
    &&\displaystyle\int_{0}^{L}\frac{1}{p(x)}\left|{}^{\bf H}\mathbf{D}^{\alpha,\beta;\psi}_{0^{+}}u\right|^{p(x)}dx-\displaystyle\int_{0}^{L}\frac{1}{p(x)}\left|{}^{\bf H}\mathbf{D}^{\alpha,\beta;\psi}_{0^{+}}u^{*}\right|^{p(x)}dx\notag\\
    &=&\displaystyle\int_{0}^{L}\frac{1}{p(x)}\displaystyle\int_{0}^{1}\left|\theta\;\; {}^{\bf H}\mathbf{D}^{\alpha,\beta;\psi}_{0^{+}}u+(1-\theta)\;\;{}^{\bf H}\mathbf{D}^{\alpha,\beta;\psi}_{0^{+}}u^{*}\right|^{p(x)}dxd\theta\nonumber\\
&=&{\color{red}\displaystyle\int_{0}^{L}\frac{1}{p(x)}\displaystyle\int_{0}^{1}\big|\theta\;\; {}^{\bf H}\mathbf{D}^{\alpha,\beta;\psi}_{0^{+}}u+(1-\theta)\;\;{}^{\bf H}\mathbf{D}^{\alpha,\beta;\psi}_{0^{+}}u^{*}\big|^{p(x)-2}}\nonumber\\
&&{\color{red}{}\big(\theta\;\; {}^{\bf H}\mathbf{D}^{\alpha,\beta;\psi}_{0^{+}}u+(1-\theta)\;\;{}^{\bf H}\mathbf{D}^{\alpha,\beta;\psi}_{0^{+}}u^{*}\big) \left( ^{H}{\bf D}^{\alpha,\beta;\psi}_{0+} u\,- \, ^{H}{\bf D}^{\alpha,\beta;\psi}_{0+} u^{*} \right)  dxd\theta}\nonumber\\
&\geq& \displaystyle\int_{0}^{L}\frac{1}{p(x)}\bigg(\int_{0}^{1}\frac{2^{2-p(x)}}{\theta}\big|\theta\;\; {}^{\bf H}\mathbf{D}^{\alpha,\beta;\psi}_{0^{+}}u+(1-\theta)\;\;{}^{\bf H}\mathbf{D}^{\alpha,\beta;\psi}_{0^{+}}u^{*}\nonumber\\
&-&{}^{\bf H}\mathbf{D}^{\alpha,\beta;\psi}_{0^{+}}u^{*}\big|^{p(x)}+\left|{}^{\bf H}\mathbf{D}^{\alpha,\beta;\psi}_{0^{+}}u^{*}\right|^{p(x)-2}\,\,{}^{\bf H}\mathbf{D}^{\alpha,\beta;\psi}_{0^{+}}u^{*} {}^{\bf H}\mathbf{D}^{\alpha,\beta;\psi}_{0^{+}}(u-u^{*})d\theta\bigg)dx\nonumber\\
&\geq& \displaystyle\int_{0}^{L}\displaystyle\int_{0}^{1}\theta^{p(x)-1}2^{2-p(x)}\left|{}^{\bf H}\mathbf{D}^{\alpha,\beta;\psi}_{0^{+}}u-{}^{\bf H}\mathbf{D}^{\alpha,\beta;\psi}_{0^{+}}u^{*}\right|^{p(x)}dxd\theta\nonumber\\
&+&\displaystyle\int_{0}^{L}\left|{}^{\bf H}\mathbf{D}^{\alpha,\beta;\psi}_{0^{+}}u^{*}\right|^{p(x)-2}{}^{\bf H}\mathbf{D}^{\alpha,\beta;\psi}_{0^{+}}u^{*}{}^{\bf H}\mathbf{D}^{\alpha,\beta;\psi}_{0^{+}}(u-u^{*})dx\nonumber\\
&=&\frac{2^{2-p^{+}}}{p^{+}}\displaystyle\int_{0}^{L}\left|{}^{\bf H}\mathbf{D}^{\alpha,\beta;\psi}_{0^{+}}u-{}^{\bf H}\mathbf{D}^{\alpha,\beta;\psi}_{0^{+}}u^{*}\right|^{p(x)}dx+\displaystyle\int_{0}^{L}g(x)(u-u^{*})dx,
\end{eqnarray}
where $u^{*}$ is the solution of the problem (\ref{eq9}). Note that, on the right side of the inequality (\ref{eq17})
\begin{equation}\label{eq18}
\frac{2^{2-p^{+}}}{p^{+}}\int_{0}^{L}\left|{}^{\bf H}\mathbf{D}^{\alpha,\beta;\psi}_{0^{+}}u-{}^{\bf H}\mathbf{D}^{\alpha,\beta;\psi}_{0^{+}}u^{*}\right|^{p(x)}dx\leq {\mathcal{I}}_{p(x)}^{\alpha,\beta;\psi}(u)-{\mathcal{I}}_{p(x)}^{\alpha,\beta;\psi}(u^{*}).
\end{equation}
So the result is follows from the {\bf Lemma \ref{lemma2}}.
\end{itemize}
\end{proof}

Now, let's investigate the main contribution of the present paper, i.e., the proof of {\bf Theorem \ref{theorem 1}}

\begin{proof} (\textbf{Proof of Theorem \ref{theorem 1}}) {\color{red} Consider $w=u-u^{*}$. The proof of the theorem will be divided into three steps.}

\textbf{Step1}: {\bf The purpose of this step is to discuss the sign of the error functional.}

Let {\color{red}
\begin{eqnarray*}
    \mathcal{G}(t)=\int_{0}^{L}\left(\frac{1}{2\varepsilon}w^{2}+ww_{t}+\varepsilon w_{t}^{2}\right)dx+2{\mathcal{I}}_{p(x)}^{\alpha,\beta;\psi}(u)- 2 {\mathcal{I}}_{p(x)}^{\alpha,\beta;\psi}(u^{*}).
\end{eqnarray*}}

Using the problem (\ref{Prob1}), yields
\begin{eqnarray}\label{eq19}
\mathcal{G}'(t)&=&\frac{1}{\varepsilon}\int_{0}^{L}\left(g(x)w-\left|{}^{\bf H}\mathbf{D}^{\alpha,\beta;\psi}_{0^{+}}u\right|^{p(x)-2}\;\;{}^{\bf H}\mathbf{D}^{\alpha,\beta;\psi}_{0^{+}}u\;\;{}^{\bf H}\mathbf{D}^{\alpha,\beta;\psi}_{0^{+}}w\right)dx- {\color{red}\int_{0}^{L}|w_t|^{2}dx}\nonumber\\
&=&\frac{1}{\varepsilon}\int_{0}^{L}\left(\left|{}^{\bf H}\mathbf{D}^{\alpha,\beta;\psi}_{0^{+}}u^{*}\right|^{p(x)-2}\;\;{}^{\bf H}\mathbf{D}^{\alpha,\beta;\psi}_{0^{+}}u^{*}\;\;{}^{\bf H}\mathbf{D}^{\alpha,\beta;\psi}_{0^{+}}w-\left|{}^{\bf H}\mathbf{D}^{\alpha,\beta;\psi}_{0^{+}}u\right|^{p(x)-2}\,\,{}^{\bf H}\mathbf{D}^{\alpha,\beta;\psi}_{0^{+}}u\;\;{}^{\bf H}\mathbf{D}^{\alpha,\beta;\psi}_{0^{+}}w\right)dx\notag\\&-&{\color{red}\int_{0}^{L}|w_{t}|^{2}dx}\nonumber\\
&\leq& -\frac{2^{2-p^{+}}}{\varepsilon}\int_{0}^{L}\left|{}^{\bf H}\mathbf{D}^{\alpha,\beta;\psi}_{0^{+}}w\right|^{p(x)}dx-{\color{red}\int_{0}^{L}|w_{t}|^{2}dx\leq 0,\;\; t>0.}\notag\\ 
\end{eqnarray}

\textbf{Step2}: The objective of this steps is to establish a relationship between $\mathcal{G}(t)$, $\displaystyle\int_{0}^{L}\left|{}^{\bf H}\mathbf{D}^{\alpha,\beta;\psi}_{0^{+}}w\right|^{p(x)}dx$ and {\color{red}$\displaystyle\int_{0}^{L}|w_{t}|^{2}dx$.} 

First, we need to prove that $\big\|{}^{\bf H}\mathbf{D}^{\alpha,\beta;\psi}_{0^{+}}u^{*}\big\|_{p(\cdot)}$ is bounded from above. 

Indeed, by multiplying the first identity of the problem \eqref{eq9} by $u^{*}$, integrate over $\Omega$, using the Poincar\'e inequality and the H\"older inequality, one has
\begin{eqnarray*}
    \int_{0}^{L}\left|{}^{\bf H}\mathbf{D}^{\alpha,\beta;\psi}_{0^{+}}u^{*}\right|^{p(x)}dx=\int_{0}^{L}g(x)u^{*}dx\Rightarrow \int_{0}^{L}\left|{}^{\bf H}\mathbf{D}^{\alpha,\beta;\psi}_{0^{+}}u^{*}\right|^{p(x)}dx\leq C\max\left\{\|g\|_{p'(\cdot)}^{p'_{+}},\|g\|_{p'(\cdot)}^{p'_{-}}\right\}\leq \Theta_{0}.
\end{eqnarray*}
Using the Eq.\eqref{eq6}, we get
\begin{equation}\label{eq20}
\int_{0}^{L}\left|{}^{\bf H}\mathbf{D}^{\alpha,\beta;\psi}_{0^{+}}w\right|^{p(x)}dx\leq 2^{p^{+}-1}\left(\int_{0}^{L}\big|{}^{\bf H}\mathbf{D}^{\alpha,\beta;\psi}_{0^{+}}u\big|^{p(x)}dx+\int_{0}^{L}|{}^{\bf H}\mathbf{D}^{\alpha,\beta;\psi}_{0^{+}}u^{*}\big|^{p(x)}dx\right)\leq 2^{p^{+}}\Theta_{0}=\Theta_{1}.
\end{equation}
In forthcoming proof, we first estimate $\mathcal{G}(t)$. By means of the H\"older inequality, we have
\begin{align}
\mathcal{G}(t)&\leq \frac{\varepsilon +1}{2\varepsilon}\int_{0}^{L}|w|^{2}dx+\frac{2\varepsilon +1}{2\varepsilon}\int_{0}^{L}|w_{t}|^{2}dx +2{\mathcal{I}}_{p(x)}^{\alpha,\beta;\psi}(u)-2{\mathcal{I}}_{p(x)}^{\alpha,\beta;\psi}(u^{*})\nonumber\\
&=\frac{\varepsilon +1}{2\varepsilon}\int_{0}^{L}|w|^{2}dx+\frac{2\varepsilon +1}{2\varepsilon}\int_{0}^{L}|w_{t}|^{2}dx {\color{red}+2\int_{0}^{L}g(x)w dx+\int_{0}^{L}\frac{1}{p(x)}\big|{}^{\bf H}\mathbf{D}^{\alpha,\beta;\psi}_{0^{+}}u\big|^{p(x)}dx}\nonumber\\
&-\int_{0}^{L}\frac{1}{p(x)}\big|{}^{\bf H}\mathbf{D}^{\alpha,\beta;\psi}_{0^{+}}u^{*}\big|^{p(x)}dx:=\mathcal{A}_{1}+\mathcal{A}_{2}+\mathcal{A}_{3}+\mathcal{A}_{4}.\label{eq21}
\end{align}

Using the inequality (\ref{A1}) and Hölder inequality, it follows that
\begin{align*}
\mathcal{A}_{1}&\leq \frac{(\varepsilon +1)}{2\varepsilon}C_{3}\max \left\{\left(\int_{0}^{L}\left|{}^{\bf H}\mathbf{D}^{\alpha,\beta;\psi}_{0^{+}}w\right|^{p(x)}dx\right)^{\frac{2}{p^{+}}}, \left(\int_{0}^{L}\left|{}^{\bf H}\mathbf{D}^{\alpha,\beta;\psi}_{0^{+}}w\right|^{p(x)}dx\right)^{\frac{2}{p^{-}}} \right\}\\
&\leq \frac{(\varepsilon +1)}{2\varepsilon}C_{3}\max\left\{\Theta_{1}^{\frac{2}{p^{-}}-\frac{1}{p^{+}}},\Theta_{1}^{\frac{1}{p^{+}}}\right\}\left(\int_{0}^{L}\left|{}^{\bf H}\mathbf{D}^{\alpha,\beta;\psi}_{0^{+}}w\right|^{p(x)}dx\right)^{\frac{1}{p^{+}}}.
\end{align*}

Furthermore, one has
\begin{align}\label{eq22}
\mathcal{A}_{3}&\leq 2C_{3}\|g\|_{p'(x)}\max \left\{\left(\int_{0}^{L}\left|{}^{\bf H}\mathbf{D}^{\alpha,\beta;\psi}_{0^{+}}w\right|^{p(x)}dx\right)^{\frac{1}{p^{+}}}, \left(\int_{0}^{L}\left|{}^{\bf H}\mathbf{D}^{\alpha,\beta;\psi}_{0^{+}}w\right|^{p(x)}dx\right)^{\frac{1}{p^{-}}} \right\}\nonumber\\
&\leq 2C_{3}\|g\|_{p'(x)}\max\left\{\Theta_{1}^{\frac{1}{p^{-}}-\frac{1}{p^{+}}},1 \right\}\left(\int_{0}^{L}\left|{}^{\bf H}\mathbf{D}^{\alpha,\beta;\psi}_{0^{+}}w\right|^{p(x)}dx\right)^{\frac{1}{p^{+}}}.
\end{align}

In the inequality above, we use
$$\left(\int_{0}^{L}\left|{}^{\bf H}\mathbf{D}^{\alpha,\beta;\psi}_{0^{+}}w\right|^{p(x)}dx\right)^{\eta}\leq \max\left\{\Theta_{1}^{\eta - \delta},1 \right\}\left(\int_{0}^{L}\left|{}^{\bf H}\mathbf{D}^{\alpha,\beta;\psi}_{0^{+}}w\right|^{p(x)}dx\right)^{\delta},\;\; 0<\delta \leq \eta.$$

On the other hand, by using the following inequality
\begin{eqnarray*}
    &&\left|\frac{\left|{}^{\bf H}\mathbf{D}^{\alpha,\beta;\psi}_{0^{+}}u\right|^{p(x)}}{p(x)} - \frac{\left|{}^{\bf H}\mathbf{D}^{\alpha,\beta;\psi}_{0^{+}}u^{*}\right|^{p(x)}}{p(x)}\right|\notag\\&\leq& \left(\left|{}^{\bf H}\mathbf{D}^{\alpha,\beta;\psi}_{0^{+}}u\right|+\left|{}^{\bf H}\mathbf{D}^{\alpha,\beta;\psi}_{0^{+}}u^{*}\right|\right)^{p(x)-1}\left|{}^{\bf H}\mathbf{D}^{\alpha,\beta;\psi}_{0^{+}}u- {}^{\bf H}\mathbf{D}^{\alpha,\beta;\psi}_{0^{+}}u^{*}\right|,
\end{eqnarray*}
we obtain
\begin{align}\label{eq23}
\mathcal{A}_{4}&\leq {\color{red}\int_{0}^{L}\left(\left|{}^{\bf H}\mathbf{D}^{\alpha,\beta;\psi}_{0^{+}}u\right|+\left|{}^{\bf H}\mathbf{D}^{\alpha,\beta;\psi}_{0^{+}}u^{*}\right|\right)^{p(x)-1}\left|{}^{\bf H}\mathbf{D}^{\alpha,\beta;\psi}_{0^{+}}w\right|dx}\nonumber\\
&\leq C_{5}(\Theta_{0})\max\left\{\Theta_{1}^{\frac{1}{p^{-}}-\frac{1}{p^{+}}},1 \right\}\left(\int_{0}^{L}\left|{}^{\bf H}\mathbf{D}^{\alpha,\beta;\psi}_{0^{+}}w\right|^{p(x)}dx\right)^{\frac{1}{p^{+}}}.
\end{align}

Using the Eq.\eqref{eq21} and Eq.\eqref{eq23}, yields
\begin{align}\label{eq24}
\mathcal{G}(t)&\leq  \frac{(\varepsilon +1)C_{3}}{2\varepsilon}\max\left\{\Theta_{1}^{\frac{2}{p^{-}}-\frac{2}{p^{+}}},1\right\}\left(\int_{0}^{L}\left|{}^{\bf H}\mathbf{D}^{\alpha,\beta;\psi}_{0^{+}}w\right|^{p(x)}dx\right)^{\frac{2}{p^{+}}}+\frac{2\varepsilon +1}{2}\int_{0}^{L}|u_t|^{2}dx\nonumber\\
&+C_{3}\|g\|_{p'(x)}\max\left\{\Theta_{1}^{\frac{1}{p^{-}}-\frac{1}{p^{+}}},1 \right\}\left(\int_{0}^{L}\left|{}^{\bf H}\mathbf{D}^{\alpha,\beta;\psi}_{0^{+}}w\right|^{p(x)}dx\right)^{\frac{1}{p^{+}}}\notag\\&{\color{red}+C_{5}(\Theta_{0})\max\left\{\Theta_{1}^{\frac{1}{p^{-}}-\frac{1}{p^{+}}},1 \right\}\left(\int_{0}^{L}\left|{}^{\bf H}\mathbf{D}^{\alpha,\beta;\psi}_{0^{+}}u\right|^{p(x)}dx\right)^{\frac{1}{p^{+}}}}\nonumber\\
&\leq \tilde{C}\left(\int_{0}^{L}\left|{}^{\bf H}\mathbf{D}^{\alpha,\beta;\psi}_{0^{+}}w\right|^{p(x)}dx\right)^{\frac{1}{p^{+}}}+\frac{2\varepsilon +1}{2}\|u_t\|_{2}^{2}, 
\end{align}
where 
\begin{eqnarray*}
    \tilde{C}&=&\frac{(\varepsilon +1)C_{3}}{2\varepsilon}\max\left\{\Theta_{1}^{\frac{2}{p^{-}}-\frac{1}{p^{+}}}, \Theta_{1}^{\frac{1}{p^{+}}}\right\}+2C_{3}\|g\|_{p'(x)}\max\left\{\Theta_{1}^{\frac{1}{p^{-}}-\frac{1}{p^{+}}},1 \right\}\notag\\&+&C_{3}(\Theta_{1})\max\left\{\Theta_{1}^{\frac{1}{p^{-}}-\frac{1}{p^{+}}},1 \right\}.
\end{eqnarray*}

So, from the Eq.\eqref{eq19} and Eq.\eqref{eq24}, we obtain
\begin{equation}\label{eq25}
\mathcal{G}(t)\leq \tilde{C}\left(\frac{-\varepsilon \mathcal{G}'(t)}{2^{2-p^{+}}}\right)^{\frac{1}{p^{+}}}-\mathcal{G}'(t).
\end{equation}

\textbf{Step 3}: In this step, we establish the first-order differential inequality. From Eq.\eqref{eq19}, yields
\begin{equation}\label{eq26}
{\color{red}|\mathcal{G}'(t)|\leq \frac{C_{3}}{\varepsilon}\|g\|_{p'(x)}\left\|{}^{\bf H}\mathbf{D}^{\alpha,\beta;\psi}_{0^{+}}w\right\|_{p(x)}+\dfrac{1}{\varepsilon}\left\|\left|{}^{\bf H}\mathbf{D}^{\alpha,\beta;\psi}_{0^{+}}u\right|^{p(x)-1}\right\|_{p'(x)}\left\|{}^{\bf H}\mathbf{D}^{\alpha,\beta;\psi}_{0^{+}}w\right\|_{p(x)}+\int_{0}^{L}|w_t|^{2}dx\leq \Theta_{2},}
\end{equation}
where 
\begin{eqnarray*}
    \Theta_{2}=\frac{C_{3}}{\varepsilon}\|g\|_{p'(x)}\max\left\{\Theta_{1}^{\frac{1}{p^{-}}},\Theta_{1}^{\frac{1}{p^{+}}}\right\}+\frac{1}{\varepsilon}\max\left\{\Theta_{1}^{\frac{p^{-}-1}{p^{-}}},\Theta_{1}^{\frac{p^{+}-1}{p^{+}}}\right\}\max\left\{\Theta_{1}^{\frac{1}{p^{-}}},\Theta_{1}^{\frac{1}{p^{+}}}\right\}+\frac{\Theta_{0}}{\varepsilon}.
\end{eqnarray*}

In this sense, using the inequality \eqref{eq26}, the Eq.\eqref{eq25} can be rewritten
\begin{equation}\label{eq27}
\mathcal{G}(t)\leq \left(\left(\frac{\varepsilon(\tilde{C})^{p^{+}}}{2^{2-p^{+}}}\right)^{\frac{1}{p^{+}}}+\Theta_{2}^{1-\frac{1}{p^{+}}}\right)\big(-\mathcal{G}'(t)\big)^{\frac{1}{p^{+}}}:=C_{4}\big(-\mathcal{G}'(t)\big)^{\frac{1}{p^{+}}}.
\end{equation}
To solve the Eq.\eqref{eq27}, we have
\begin{equation}\label{*}
    \mathcal{G}(t)\leq \big(\mathcal{G}^{1-p^{+}}(0)C_{4}^{-p^{+}}(p^{+}-1)t\big)^{\frac{1}{1-p^{+}}}\leq \big(C_{4}^{-p^{+}}(p^{+}-1)\big)^{\frac{1}{1-p^{+}}}t^{\frac{1}{1-p^{+}}}.
\end{equation}

The inequality (\ref{*}) together with the non negativity of the form 
\begin{eqnarray*}
\frac{1}{2\varepsilon}w^{2}+ww_{t}+\varepsilon w_{t}^{2}=\left(\frac{1}{2\sqrt{\varepsilon}}w^{2}+\sqrt{\varepsilon}w_{t}\right)^{2}+\frac{1}{4\varepsilon}w^{2},
\end{eqnarray*}
we obtain
\begin{eqnarray*}
    {\mathcal{I}}_{p(x)}^{\alpha,\beta;\psi}(u)-{\mathcal{I}}_{p(x)}^{\alpha,\beta;\psi}(u^{*})\leq \frac{1}{2}\left(C_{4}^{-p^{+}}(p^{+}-1)\right)^{\frac{1}{1-p^{+}}}t^{\frac{1}{1-p^{+}}}.
\end{eqnarray*}

Finally, from Eq.\eqref{eq18}, one has
\begin{eqnarray*}
    \int_{0}^{L}\left|{}^{\bf H}\mathbf{D}^{\alpha,\beta;\psi}_{0^{+}}u^{*}-\,\,{}^{\bf H}\mathbf{D}^{\alpha,\beta;\psi}_{0^{+}}u(x,t)\right|^{p(x)}dx\leq \frac{p^{+}}{2^{3-p^{+}}}\left(C_{}^{-p^{+}}(p^{+}-1)\right)^{\frac{1}{1-p^{+}}}t^{\frac{1}{1-p^{+}}}=\Theta t^{\frac{1}{1-p^{+}}},
\end{eqnarray*}
where {\color{red}
\begin{equation}\label{eq28}
\Theta=\big(C_{4}^{-p^{+}}(p^{+}-1)\big)^{\frac{1}{1-p^{+}}},\;\; C_{4}:=\left(\left(\frac{\varepsilon(\tilde{C})^{p^{+}}}{2^{2-p^{+}}}\right)^{\frac{1}{p^{+}}}+\Theta_{2}^{1-\frac{1}{p^{+}}}\right).
\end{equation}}

Therefore, we conclude the proof.
\end{proof}

\section*{Declarations}

\subsection*{Funding:} This research received no external funding.

\subsection*{Data Availability Statement:} Data sharing not applicable to this paper as no data sets were generated or analyzed during the current study.

\subsection*{Acknowledgments:} The authors are very grateful to the anonymous reviewers for their
useful comments that led to improvement of the manuscript.

\subsection*{Conflicts of Interest:} The authors declare no conflict of interest.


\end{document}